\newtheorem{algorithm}{Algorithm}[section]   
\begin{document}

\title{A new approach to shooting methods
	for terminal value problems
	of fractional differential equations}
\titlerunning{Efficient shooting methods for fractional TVPs}
\author{Kai Diethelm
	 \and Frank Uhlig}

\institute{K. Diethelm \at
	Faculty of Applied Natural Sciences and Humanities (FANG), 
 	Technical University of Applied Sciences W\"urzburg-Schweinfurt,
	Ignaz-Sch\"on-Str.\ 11,
	97421 Schweinfurt,
	Germany,
	ORCID: \href{https://www.orcid.org/0000-0002-7276-454X}{0000-0002-7276-454X}.
	\email{kai.diethelm@thws.de}
	\and
	F. Uhlig \at
	Department of Mathematics and Statistics, 
	Auburn University, 
	Auburn, AL 36849-5310, 
	USA,
	ORCID: \href{https://www.orcid.org/0000-0002-7495-5753}{0000-0002-7495-5753}.
	\email{uhligfd@auburn.edu}
}

\date{Received: date / Accepted: date}

\maketitle

\begin{abstract}
	For terminal value problems of fractional differential equations of 
	order $\alpha \in (0,1)$ that use Caputo derivatives, shooting methods
	are a well developed and investigated approach. 
	Based on recently established analytic properties of such problems,
	we develop a new technique to select the required initial values
	that solves such shooting problems quickly and accurately.
	Numerical experiments indicate that this new 
	proportional secting technique converges very quickly and accurately to the solution.
	Run time measurements indicate a speedup factor 
	of between 4 and 10 when compared to the standard bisection method.

\keywords{%
	fractional differential equation \and Caputo derivative \and
	terminal condition \and terminal value problem \and
	shooting method \and proportional secting \and secant method
}

\subclass{Primary 65L10; Secondary 34A08, 65R20}

\end{abstract}



\section{Introduction and Motivation} 
	\label{sec:intro}

Differential equations of fractional (i.e., non-integer) order \cite{Di2010} 
are an object of great current interest.  They are 
 useful tools for modeling various phenomena in science and engineering,
see, e.g., \cite{hbfc7,hbfc8,hbfc6,hbfc4,hbfc5}.  
These  problems typically have the form
\begin{equation}
	\label{eq:ivp}
	D_a^\alpha y(t) = f(t, y(t)), \qquad y(a) = \tilde y_0,
\end{equation}
where $\alpha \in (0, 1)$ is the order of the differential operator. Note that  $\alpha > 1$
arises only rarely in applications and this case will not be discussed here.
In eq.~\eqref{eq:ivp}, $f:[a,b] \times \mathbb R \to \mathbb R$ 
is a given function
and  $\tilde y_0 \in \mathbb R$  describes the initial state of the modeled system at $t = a$.
The differential operator $D_a^\alpha$ in eq.~\eqref{eq:ivp}
is the  Caputo differential operator of order $\alpha$ with starting point $a \in \mathbb R$ as
defined by \cite[Definition 3.2]{Di2010}, namely
\begin{equation}
	\label{eq:def-caputo}
	D_a^\alpha y(t) 
	= \frac 1 {\Gamma(1-\alpha)} \frac{\mathrm d}{\mathrm d t} 
		\int_a^t (t-s)^{-\alpha} \left( y(s) - y(a) \right) \mathrm d s
\end{equation}
for $t \in [a,b]$ and sufficiently smooth functions $y : [a,b] \to \mathbb R$.

In equations \eqref{eq:ivp} and \eqref{eq:def-caputo},
the starting  point $a$ plays a special role as the
start of the process that is being studied. A typical application  
from mechanics is an object made from viscoelastic material 
under external loads that is still  in its virgin state
for  $t < a$  and to which forces are only applied for
$t \ge a$. Here problem \eqref{eq:ivp} is an \emph{initial value problem} 
as $y(a) = \tilde y_0$ is the state of the process at the initial time $t = a$ and we are  
interested in finding  $y(t)$ for $t \in [a,b]$ and  some 
given time $b > a$.

From the analytical viewpoint  such initial value problems are well understood, 
see e.g., \cite[Chapters 6 and 7]{Di2010}. Many numerical methods  
have been proposed and investigated; cf., e.g., \cite{hbfc3}. However, 
from the modeling perspective, these methods often are of limited use because
they hinge on the exact state of the process
at the initial time $t=a$ and this may be impossible to determine in actual applications.
If  one can only measure the value of $y(b)$ for some $b > a$
but not at $a$ itself, this leads to
\begin{equation}
	\label{eq:tvp}
	D_a^\alpha y(t) = f(t, y(t)), \qquad y(b) = y^*.
\end{equation}
Then we are tasked to solve (\ref{eq:tvp}) on the interval $[a, c]$ where
$a$ is the starting time of the process and $c \ge b$. This can be done in two steps:   
\begin{itemize}
\item Solve the problem on the interval $[a,b]$. Since  $b$ 
	in eq.~\eqref{eq:tvp} denotes the interval's  end point,   this is a 
	\emph{terminal value problem}.
\item With  the solution known on the entire interval $[a,b]$, the value of $y$ 
	at the initial point $a$ is  known as $\tilde y_0 = y(a)$.
	Therefore we can  replace the terminal condition $y(b) = y^*$ in eq.~\eqref{eq:tvp}
	by the initial condition $y(a) = \tilde y_0$. This converts the original problem into 
	the classical initial value problem \eqref{eq:ivp} that can now be solved on the entire interval $[a,c]$.
\end{itemize}
As indicated above,
the second step of this process has a well understood structure and can be handled by 
standard methods. Therefore it does not require any special attention.  Hence we focus only
on the first step in this work.

\section{Analytic Properties of Terminal Value Problems}
\label{sec:analysis}

Here we  provide the basis for our
numerical work with terminal value problems
for fractional differential equations and recall some of their known analytical properties.

Conditions for well posed-ness have been  discussed and partially 
established in \cite{Di2008,DF2012}. A complete analysis
is  in \cite{CT2017}, with additional aspects  presented in \cite{DF2018,FM2011}.
For our work here, we shall specifically use the following result that is an immediate consequence 
of a statement by Cong and Tuan regarding initial value problems \cite[Theorem 3.5]{CT2017}.
It follows the classical set-up in assuming
that the function $f$ in eq.~\eqref{eq:tvp} is continuous on $[a,b] \times \mathbb R$, maps 
into $\mathbb R$ and
satisfies a Lipschitz condition with respect to the second variable.

\begin{theorem}
	\label{thm:tvp-sol-ex}
	Let $f : [a,b] \times \mathbb R \to \mathbb R$ be continuous and satisfy the Lipschitz condition 
	with respect to
	the second variable 
	\begin{equation}
		\label{eq:lip}
		|f(t, x_1) - f(t, x_2)| \le L(t) | x_1 - x_2 |
	\end{equation}
	for all $t \in [a,b]$ with some function $L \in C[a,b]$. 
	Then, for any terminal value $y^* \in \mathbb R$ the terminal value problem \eqref{eq:tvp}
	has a unique solution $y$ in $C[a,b]$.
\end{theorem}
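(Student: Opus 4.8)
The plan is to pass from the terminal value problem \eqref{eq:tvp} to the family of associated initial value problems, which is the point of view underlying the shooting methods studied in the rest of the paper. For each $\eta \in \mathbb R$, the classical theory (see \cite[Chapters 6 and 7]{Di2010}) provides a unique solution $y(\cdot\,;\eta) \in C[a,b]$ of the initial value problem $D_a^\alpha y(t) = f(t,y(t))$, $y(a) = \eta$, equivalently the unique continuous solution of the Volterra equation
\[
	y(t;\eta) = \eta + \frac 1{\Gamma(\alpha)} \int_a^t (t-s)^{\alpha - 1} f\bigl(s, y(s;\eta)\bigr) \, \mathrm d s .
\]
Setting $\varphi(\eta) := y(b;\eta)$, a function $y \in C[a,b]$ solves \eqref{eq:tvp} exactly when $y = y(\cdot\,;\eta)$ for some $\eta$ with $\varphi(\eta) = y^*$ (if $y$ solves \eqref{eq:tvp}, it solves the initial value problem with $\eta = y(a)$, so $y = y(\cdot\,;\eta)$ by uniqueness and $\varphi(\eta) = y(b) = y^*$; the converse is immediate). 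Hence the theorem is equivalent to the statement that $\varphi$ is a bijection of $\mathbb R$. To see that $\varphi$ is Lipschitz, subtract the Volterra equations for two initial values, put $w := y(\cdot\,;\eta_1) - y(\cdot\,;\eta_2)$ and $L_\infty := \max_{t\in[a,b]} L(t)$, use \eqref{eq:lip} to get $|w(t)| \le |\eta_1 - \eta_2| + \frac{L_\infty}{\Gamma(\alpha)} \int_a^t (t-s)^{\alpha-1}|w(s)|\,\mathrm d s$, and apply the generalized (fractional) Gronwall inequality, which bounds $|w(t)|$ by $|\eta_1 - \eta_2|\,E_\alpha\bigl(c\,(t-a)^\alpha\bigr)$ with the Mittag-Leffler function $E_\alpha$ and a constant $c$ depending only on $L_\infty$ and $\alpha$; at $t = b$ this controls $|\varphi(\eta_1) - \varphi(\eta_2)|$.

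The heart of the argument is that $\varphi$ is strictly increasing. Fix $\eta_1 > \eta_2$. The difference $w$ satisfies $w(a) = \eta_1 - \eta_2 > 0$ and, viewed as a function of $t$, solves the linear relation $D_a^\alpha w(t) = g(t) := f\bigl(t, y(t;\eta_1)\bigr) - f\bigl(t, y(t;\eta_2)\bigr)$ with $|g(t)| \le L_\infty |w(t)|$. I would compare $w$ with $v(t) := (\eta_1 - \eta_2)\,E_\alpha\bigl(-L_\infty(t-a)^\alpha\bigr)$, the solution of $D_a^\alpha v = -L_\infty v$, $v(a) = \eta_1 - \eta_2$, which is strictly positive on $[a,b]$ because $E_\alpha(-x) > 0$ for all $x \ge 0$ when $\alpha \in (0,1)$. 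Suppose $w$ vanished at some point of $(a,b]$ and let $t^*$ be the smallest such point; then $w \ge 0$, hence $g \ge -L_\infty w$, on $[a, t^*]$, so $u := w - v$ satisfies $u(a) = 0$ and $D_a^\alpha u + L_\infty u = g + L_\infty w \ge 0$ on $[a, t^*]$. The variation-of-constants (Duhamel) representation of solutions of the linear Caputo equation, together with the nonnegativity of $E_\alpha(-\,\cdot\,)$ and $E_{\alpha,\alpha}(-\,\cdot\,)$ on $[0,\infty)$, then forces $u \ge 0$, i.e.\ $w \ge v > 0$, on $[a, t^*]$, which contradicts $w(t^*) = 0$. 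Therefore $w > 0$ on all of $[a,b]$ and, by the same representation applied on the whole interval, $w(t) \ge (\eta_1 - \eta_2)\,E_\alpha\bigl(-L_\infty(t-a)^\alpha\bigr)$ there; at $t = b$ this gives $\varphi(\eta_1) - \varphi(\eta_2) \ge (\eta_1 - \eta_2)\,E_\alpha\bigl(-L_\infty(b-a)^\alpha\bigr) > 0$.

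Finally, this last inequality with $\eta_2 = 0$ held fixed and $\eta_1 \to \pm\infty$ shows $\varphi(\eta) \to \pm\infty$, so the continuous, strictly increasing map $\varphi$ is onto $\mathbb R$; the unique $\eta^*$ with $\varphi(\eta^*) = y^*$ then produces the (unique) solution $y(\cdot\,;\eta^*)$ of \eqref{eq:tvp}. The step I expect to be the main obstacle is the comparison argument in the monotonicity step: the forcing term $g$ is only bounded and need not be continuous, so I would derive the comparison from the explicit Duhamel formula and the classical nonnegativity of $E_\alpha(-\,\cdot\,)$ and $E_{\alpha,\alpha}(-\,\cdot\,)$ rather than from a maximum principle, and one has to be careful that the Caputo operators and fractional integrals occurring on the truncated interval $[a,t^*]$ all share the base point $a$ so that the representation formula applies there without change.
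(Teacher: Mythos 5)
Your argument is correct, but note that the paper itself does not prove this theorem at all: it is imported verbatim from Cong and Tuan \cite[Theorem 3.5]{CT2017}, so there is no in-paper proof to measure you against. What you have written is a self-contained reconstruction along exactly the shooting lines that the rest of the paper exploits numerically: your reduction to the scalar map $\varphi(\eta) = y(b;\eta)$ and the two-sided control of $\varphi(\eta_1)-\varphi(\eta_2)$ by $(\eta_1-\eta_2)E_\alpha\bigl(\pm L_\infty (b-a)^\alpha\bigr)$ is precisely the separation estimate that the paper quotes later as Theorem \ref{thm:bounds-nonlin} and Corollary \ref{cor:bounds} (with $\tilde\ell_*(b) \ge -L_\infty$ and $\tilde\ell^*(b) \le L_\infty$ under \eqref{eq:lip}), and strict monotonicity plus the coercive lower bound plus continuity of $\varphi$ indeed yield bijectivity and hence existence and uniqueness for every terminal value. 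Two small points. First, your comparison step uses the positivity of $E_{\alpha,\alpha}(-x)$ for $x\ge 0$ in the Duhamel kernel; this is classical (complete monotonicity of $E_{\alpha,\alpha}$ on the negative real axis) but goes beyond what the paper's Lemma \ref{lem:ml} records for the one-parameter function $E_\alpha$, so it should be cited explicitly if the proof were to be included. Second, your worry that the forcing term $g(t) = f\bigl(t,y(t;\eta_1)\bigr)-f\bigl(t,y(t;\eta_2)\bigr)$ ``need not be continuous'' is unfounded --- it is continuous as a composition of continuous functions --- though working at the level of the Volterra integral identity $w(t) = w(a) + I_a^\alpha g(t)$, as you suggest, is in any case the cleanest way to justify the variation-of-constants representation on the truncated interval $[a,t^*]$. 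With those two remarks your proof is complete.
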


\begin{remark}
	When talking about initial value problems, it is common practice to discuss not only scalar but
	multidimensional problems, i.e.\ to assume that the function $f$ on the right-hand side of the
	differential equation maps from $[a,b] \times \mathbb R^d$ to $\mathbb R^d$ with some $d \ge 1$.
	In the initial value problem setting, this generalization does not introduce any difficulties
	as far as the existence and the uniqueness of solutions are concerned. However, if terminal
	value problems are addressed as we are doing here, the situation is significantly different.
	To be precise, in \cite{CT2017} it was shown that well-posedness of terminal value problems
	in general only occurs for $d = 1$; with a counterexample for $d > 1$  given in \cite[Section 6]{CT2017}.
	This counterexample demonstrated that multiple solutions can arise when $d > 1$.
	Therefore we only consider the scalar case $d=1$ here. 
\end{remark}

A classical technique for investigating analytical properties of \emph{initial} value problems for 
differential equations is to rewrite the given problem in the form of an equivalent
integral equation. This can be done in the fractional case in exactly the same way as in the
classical case of  first order problems, see, e.g., \cite[Lemma 6.2]{Di2010} and 
results in a Volterra integral equation. But for  \emph{terminal} value problems
of fractional differential equations this changes significantly.  
When rewriting our fractional derivative problem in  form of an integral equation, the resulting integral 
equation will be  of Fredholm type, not Volterra type \cite[Theorem 6.18]{Di2010}. In 
first order differential equations, Fredholm integral equations may arise as well, but in connection
with \emph{boundary} value problems and not with \emph{initial} value problems. Therefore 
we shall employ  techniques that are  based on  principles used for boundary value 
problems of integer order initial value problems. Specifically, 
\emph{shooting methods}, see \cite{Keller}, are the foundation of
the numerical methods that we suggest for solving fractional terminal value problems. 

To describe our method, we need further analytic prerequisites.
An important concept is the one-parameter Mittag-Leffler function 
$E_\alpha : \mathbb C \to \mathbb C$ defined for $\mathop{\mathrm{Re}} \alpha > 0$ as
\[
	E_\alpha(z) = \sum_{k=0}^\infty \frac{z^k}{\Gamma(\alpha k + 1)},
\]
see \cite{GKMR2020}. For  $\alpha$ values that are relevant in our setting we exploit the
following:

\begin{lemma}
	\label{lem:ml}
	For all $\alpha \in (0,1)$, the function $E_\alpha$ is analytic.
	Moreover, for these $\alpha$ and all $z \in \mathbb  R$ we have 
	$E_\alpha(z) > 0$ and  $E_\alpha(z)$ is strictly increasing in $z \in \mathbb R$.
\end{lemma}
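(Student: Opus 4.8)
The plan is to establish the three claimed properties of $E_\alpha$ in turn: analyticity, positivity on $\mathbb R$, and strict monotonicity on $\mathbb R$.

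First I would observe that analyticity is essentially immediate from the definition: the power series $\sum_{k=0}^\infty z^k / \Gamma(\alpha k + 1)$ has infinite radius of convergence because $\Gamma(\alpha k + 1)$ grows super-exponentially in $k$ (e.g. by Stirling's formula, so that $(1/\Gamma(\alpha k+1))^{1/k} \to 0$), hence $E_\alpha$ is entire and in particular analytic. Next, for positivity on $\mathbb R$, the case $z \ge 0$ is trivial since every term in the series is nonnegative and the constant term $1/\Gamma(1) = 1$ is positive. The case $z < 0$ is the substantive one: here the series is alternating in sign but not obviously of bounded decreasing terms, so I would instead invoke the known integral representation of $E_\alpha$ for $\alpha \in (0,1)$ — namely that $E_\alpha(-x)$ for $x > 0$ is completely monotone, equivalently $E_\alpha(z) = \int_0^\infty e^{zr}\, \mathrm d\mu_\alpha(r)$ for a suitable probability measure $\mu_\alpha$ on $[0,\infty)$ (this is the classical result of Pollard / the complete monotonicity of $E_\alpha(-x)$; it is standard and can be cited from \cite{GKMR2020}). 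Positivity of $E_\alpha$ on all of $\mathbb R$ then follows since the integrand $e^{zr}$ is strictly positive.

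For strict monotonicity, I would differentiate the series term by term (legitimate inside the radius of convergence), giving $E_\alpha'(z) = \sum_{k=1}^\infty k z^{k-1} / \Gamma(\alpha k + 1)$. Reindexing, this is again a power series with nonnegative coefficients and positive constant term, so the same dichotomy applies: for $z \ge 0$ it is manifestly positive, and for $z < 0$ one again appeals to the integral representation, under which $E_\alpha'(z) = \int_0^\infty r e^{zr}\, \mathrm d\mu_\alpha(r) > 0$. Hence $E_\alpha' > 0$ on all of $\mathbb R$, which gives strict monotone increase. Alternatively, and perhaps more in keeping with the spirit of a self-contained argument, strict monotonicity on $(-\infty, 0]$ can be deduced from the complete monotonicity of $x \mapsto E_\alpha(-x)$ together with the fact that $E_\alpha$ is non-constant.

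The main obstacle is the behavior for negative real arguments: the bare series definition does not readily yield either positivity or monotonicity there because the signs alternate and the magnitudes of successive terms are not monotone for all $z$. The clean route is to import the complete monotonicity / Laplace-transform representation of $E_\alpha(-x)$ on $[0,\infty)$, a well-documented property for $\alpha \in (0,1)$; once that is in hand, both positivity and strict monotonicity on the negative axis drop out immediately, and the nonnegative axis is handled directly from the series. I would therefore structure the proof as: (i) entire via radius of convergence; (ii) split into $z \ge 0$ (series) and $z < 0$ (integral representation) for both $E_\alpha > 0$ and $E_\alpha' > 0$; (iii) conclude.
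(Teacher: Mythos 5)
Your proof is correct and follows essentially the same route as the paper's: the series gives analyticity and the case $z\ge 0$, while positivity and strict monotonicity for $z<0$ rest on the complete monotonicity of $E_\alpha(-x)$ for $\alpha\in(0,1)$ (Pollard's theorem), which is exactly the content of Subsection 3.7.2 of \cite{GKMR2020} that the paper cites. You merely spell out the Bernstein/Laplace-transform mechanism (and a self-contained radius-of-convergence argument for analyticity) that the paper leaves implicit in its citations.
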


\begin{proof}
	The analyticity of $E_\alpha$ follows from \cite[Proposition 3.1]{GKMR2020}. 
	The inequality $E_\alpha(z) > 0$ is trivial for $z \ge 0$ since Euler's Gamma function
	satisfies $\Gamma(w) > 0$ for all $w > 0$. For $z<0$ the inequality is a consequence
	of the properties discussed in \cite[Subsection 3.7.2]{GKMR2020} and the strict
	monotonicity follows from the properties shown in \cite[Subsection 3.7.2]{GKMR2020} as well.
	\qed
\end{proof}

To become familiar with the nature of our numerical method, it is essential to understand further properties
of initial value problems for fractional differential equations. Specifically, we mention 
the following:
Given two solutions of the same fractional differential equation on the same interval, but starting from 
two different initial values,
what are the differences between these two solutions on
this interval? Upper bounds for the differences are directly available  
by standard classical Gron\-wall type arguments. For our purposes, however, we also need
lower bounds, about which much less is known.
First we explain our
result for a linear differential equation. This is very simple but immediately
gives us important insights.

\begin{theorem}
	\label{thm:bounds-lin}
	Let  $\ell \in C[a,b]$ be given,  and
	let $y_1$ and $y_2$, respectively, be the solutions of the initial value problems
	\[
		D_a^\alpha y_k(t) = \ell(t) y_k(t), \qquad y_k(a) = y_{0,k} \qquad (k = 1,2)
	\]
	with $y_{0,1} > y_{0,2}$. Then, for all $t \in [a,b]$,
	\begin{equation}
		\label{eq:diff-bounds1}
		( y_{0,1}  - y_{0,2} ) E_\alpha (\ell_*(t) (t-a)^\alpha) 
		\le  y_1(t) - y_2(t) 
			\le ( y_{0,1} - y_{0,2} ) E_\alpha (\ell^*(t) (t-a)^\alpha)
	\end{equation}
	where $\ell_*(t) = \min_{s \in [a,t]} \ell(s)$ and $\ell^*(t) = \max_{s \in [a,t]} \ell(s)$.
\end{theorem}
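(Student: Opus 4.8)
The plan is to reduce the nonlinear-looking statement to a linear comparison by setting $w(t) = y_1(t) - y_2(t)$ and observing that, since the Caputo derivative is linear and both $y_k$ solve the same linear equation, $w$ itself satisfies the linear initial value problem $D_a^\alpha w(t) = \ell(t) w(t)$ with $w(a) = y_{0,1} - y_{0,2} =: w_0 > 0$. Thus it suffices to prove: if $D_a^\alpha w(t) = \ell(t) w(t)$ with $w(a) = w_0 > 0$, then $w_0 E_\alpha(\ell_*(t)(t-a)^\alpha) \le w(t) \le w_0 E_\alpha(\ell^*(t)(t-a)^\alpha)$ for all $t \in [a,b]$. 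I would first record the standard fact (e.g.\ \cite[Lemma 6.2]{Di2010}, adapted to the Caputo setting) that this initial value problem is equivalent to the Volterra integral equation $w(t) = w_0 + \frac{1}{\Gamma(\alpha)} \int_a^t (t-s)^{\alpha-1} \ell(s) w(s)\,\mathrm ds$.

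Next I would establish positivity of $w$ on $[a,b]$: since $w(a) = w_0 > 0$ and $w$ is continuous, it is positive on a maximal subinterval $[a,\tau)$; on that subinterval the integrand $(t-s)^{\alpha-1}\ell(s)w(s)$ is controlled, and a standard fractional Gr\"onwall argument (or the known closed form when $\ell$ is constant, combined with a comparison principle) shows $w$ cannot reach zero, so $\tau = b$. With positivity in hand, the two-sided bound follows from a comparison-principle argument: fix $t \in (a,b]$ and note that for all $s \in [a,t]$ we have $\ell_*(t) \le \ell(s) \le \ell^*(t)$, hence $\ell_*(t) w(s) \le \ell(s) w(s) \le \ell^*(t) w(s)$ because $w(s) > 0$. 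Feeding these inequalities into the Volterra equation and comparing with the solutions $v_\pm$ of the constant-coefficient problems $D_a^\alpha v_\pm(t) = \ell^{*}(t)\,v_+(t)$ respectively with frozen constant $\ell_*(t)$, whose solutions are known explicitly to be $w_0 E_\alpha(\ell^*(t)(t-a)^\alpha)$ and $w_0 E_\alpha(\ell_*(t)(t-a)^\alpha)$ (using that $E_\alpha$ is the eigenfunction of the Caputo operator for the power nonlinearity, cf.\ the defining series), yields \eqref{eq:diff-bounds1}. A minor subtlety is that $\ell_*(t)$ and $\ell^*(t)$ depend on the \emph{endpoint} $t$, so the comparison functions are different for each target $t$; this is handled by fixing $t$ first and running the comparison on $[a,t]$ only, where the constants are genuinely constant.

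The main obstacle, and the step needing the most care, is the fractional comparison principle itself: unlike the first-order case, one cannot simply differentiate and invoke the sign of the derivative at a first crossing point, because the Caputo derivative at a point depends on the whole history of the function. The clean way around this is to stay entirely within the integral-equation formulation and use monotonicity of the Volterra operator together with monotone iteration: define $u_0(t) = w_0 E_\alpha(\ell^*(t)(t-a)^\alpha)$ (for fixed target $t$, restricted to $[a,t]$) and check directly from the series definition of $E_\alpha$ and the inequality $\ell(s) \le \ell^*(t)$ that $u_0$ is a supersolution of the Volterra equation; then the Picard iterates starting from $u_0$ decrease monotonically to $w$, giving $w \le u_0$. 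The lower bound is symmetric with $\ell_*(t)$ and a subsolution. Here I would lean on Lemma~\ref{lem:ml}: the positivity and strict monotonicity of $E_\alpha$ on $\mathbb R$ guarantee that the comparison functions are themselves positive and that the bounds are nontrivial and correctly ordered, and analyticity ensures they are bona fide solutions of the constant-coefficient equations. The remaining details — verifying the equivalence with the integral equation, checking the super/subsolution inequalities term by term — are routine and I would not spell them out in full.
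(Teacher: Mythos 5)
Your reduction to $w=y_1-y_2$ with $D_a^\alpha w=\ell(t)w$, $w(a)=w_0>0$, the passage to the Volterra equation, and the device of freezing the endpoint $t$ so that $\ell_*(t),\ell^*(t)$ become genuine constants on $[a,t]$ are all sound, and you correctly identify the fractional comparison principle as the crux. Note, however, that the paper does not prove this theorem at all: it simply cites \cite[Theorems 4 and 5]{DT2022}, so you are attempting a from-scratch argument for a result whose published proofs are themselves nontrivial.

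The proposed resolution of the crux has a genuine gap. The Volterra operator
$(Tu)(s)=w_0+\frac{1}{\Gamma(\alpha)}\int_a^s(s-r)^{\alpha-1}\ell(r)u(r)\,\mathrm dr$
is order-preserving only when $\ell\ge 0$ on the relevant interval; if $u\ge v$ but $\ell(r)<0$ somewhere, the integrand $\ell(r)(u(r)-v(r))$ is negative there and $Tu\ge Tv$ cannot be concluded. Consequently the claim that the Picard iterates starting from the supersolution $u_0(s)=w_0E_\alpha(\ell^*(t)(s-a)^\alpha)$ ``decrease monotonically to $w$'' is false precisely in the case where the theorem has real content. Already for the constant-coefficient model $D_a^\alpha w=-\lambda w$, $\lambda>0$, with $u_0\equiv w_0$ (a valid supersolution), the iterates are the partial sums of the alternating series for $w_0E_\alpha(-\lambda(s-a)^\alpha)$: they satisfy $u_1\le u_0$ but $u_2\ge u_1$, and for $s-a$ large one even loses $u_2\le u_0$. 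So monotone iteration establishes neither $w\le u_0$ nor the symmetric lower bound, and the sub/supersolution framework does not close the argument. The positivity step suffers from the same circularity: the fractional Gr\"onwall inequality (which also requires a nonnegative coefficient) yields only an upper bound on $|w|$, and the assertion that $w$ ``cannot reach zero'' is exactly the lower bound in \eqref{eq:diff-bounds1} for negative $\ell_*$, i.e.\ the statement being proved. A first-crossing argument fails too, because the Caputo operator's memory prevents restarting at an interior time. To repair the proof one needs the genuinely different techniques of \cite{DT2022} (or of Cong and Tuan), e.g.\ a weighted-norm fixed-point argument or a Laplace-transform/complete-monotonicity analysis of the resolvent kernel, which is why the paper delegates the proof to that reference rather than arguing as you do.
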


\begin{proof}
	The upper bound has been derived in \cite[Theorem 5]{DT2022} and
	the lower bound is shown in \cite[Theorem 4]{DT2022}.
	\qed
\end{proof}

In the general (nonlinear) case the result is  more involved 
but the essential properties of the linear case remain intact.

\begin{theorem}
	\label{thm:bounds-nonlin}
	Assume that $f$ satisfies the hypotheses of Theorem \ref{thm:tvp-sol-ex}.
	Let $y_1$ and $y_2$, respectively, be the solutions of the initial value problems
	\[
		D_a^\alpha y_k(t) = f(t, y_k(t)), \qquad y_k(a) = y_{0,k} \qquad (k = 1,2)
	\]
	where $y_{0,1} > y_{0,2}$. Then, for all $t \in [a,b]$ we have
	\begin{equation}
		\label{eq:diff-bounds2}
		( y_{0,1} - y_{0,2} ) E_\alpha (\tilde \ell_*(t) (t-a)^\alpha) 
		\le   y_1(t)  - y_2(t) 
			\le ( y_{0,1} - y_{0,2} ) E_\alpha (\tilde \ell^*(t) (t-a)^\alpha)
	\end{equation}
	where
	\begin{subequations}
		\label{eq:ltildestar}
	\begin{equation}
		\tilde \ell_*(t) 
		= \inf_{s \in [a,t], y \ne 0} \frac{f(s, y + y_1(s)) - f(s, y_1(s))} y 
		< \infty 
	\end{equation}
	and 
	\begin{equation}
		\tilde \ell^*(t) 
		= \sup_{s \in [a,t], y \ne 0} \frac{f(s, y + y_1(s)) - f(s, y_1(s))} y 
		< \infty .
	\end{equation}
	\end{subequations}
\end{theorem}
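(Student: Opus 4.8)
The plan is to reduce the nonlinear statement to the linear case already established in Theorem~\ref{thm:bounds-lin}. Given the two solutions $y_1$ and $y_2$, set $\ell(s) = (f(s, y_1(s)) - f(s, y_2(s)))/(y_1(s) - y_2(s))$ wherever $y_1(s) \neq y_2(s)$, and $\ell(s) = 0$ otherwise; the Lipschitz hypothesis guarantees $|\ell(s)| \le L(s)$, so $\ell$ is bounded. By construction, $y_1$ and $y_2$ both satisfy the same \emph{linear} equation $D_a^\alpha y_k(t) = \ell(t) y_k(t)$ with their respective initial values, so if $\ell$ were continuous we could invoke Theorem~\ref{thm:bounds-lin} directly to obtain $( y_{0,1} - y_{0,2} ) E_\alpha (\ell_*(t) (t-a)^\alpha) \le y_1(t) - y_2(t) \le ( y_{0,1} - y_{0,2} ) E_\alpha (\ell^*(t) (t-a)^\alpha)$.

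Next I would connect the coefficient bounds $\ell_*(t), \ell^*(t)$ to the quantities $\tilde\ell_*(t), \tilde\ell^*(t)$ defined in the statement. Since $\tilde\ell^*(t)$ is a supremum of difference quotients of $f$ around the \emph{first} solution $y_1$ over all $s \in [a,t]$ and all nonzero increments $y$, and in particular we may take $y = y_2(s) - y_1(s)$ (when nonzero), we get $\ell(s) \le \tilde\ell^*(t)$ for every $s \in [a,t]$, hence $\ell^*(t) \le \tilde\ell^*(t)$; similarly $\tilde\ell_*(t) \le \ell_*(t)$. Combined with the monotonicity of $E_\alpha$ from Lemma~\ref{lem:ml} and the fact that $y_{0,1} - y_{0,2} > 0$, the chain $E_\alpha(\tilde\ell_*(t)(t-a)^\alpha) \le E_\alpha(\ell_*(t)(t-a)^\alpha)$ and $E_\alpha(\ell^*(t)(t-a)^\alpha) \le E_\alpha(\tilde\ell^*(t)(t-a)^\alpha)$ widens the interval on both sides and yields \eqref{eq:diff-bounds2}. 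The finiteness claims $\tilde\ell_*(t) > -\infty$ and $\tilde\ell^*(t) < \infty$ are immediate from the Lipschitz bound, which forces every difference quotient into $[-L^*(t), L^*(t)]$ where $L^*(t) = \max_{s\in[a,t]} L(s)$.

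The main obstacle is the regularity gap: Theorem~\ref{thm:bounds-lin} is stated for a \emph{continuous} coefficient $\ell$, but the $\ell$ manufactured above need not be continuous (it can jump where $y_1 - y_2$ changes sign, and the removable-singularity value may not match the limit). I see two ways around this. The cleaner route is an approximation argument: note that the bounds in Theorem~\ref{thm:bounds-lin} depend on $\ell$ only through $\ell_*(t)$ and $\ell^*(t)$; replace $\ell$ by a continuous $\ell_\varepsilon$ with $\ell_*(t) - \varepsilon \le \ell_\varepsilon \le \ell^*(t) + \varepsilon$ on $[a,b]$ that agrees with the true equation closely enough, solve the corresponding linear problems, pass to the limit $\varepsilon \to 0$ using continuous dependence of solutions on the coefficient and the continuity of $E_\alpha$. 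Alternatively, and more robustly, one inspects the proof of Theorem~\ref{thm:bounds-lin} (i.e.\ \cite[Theorems 4 and 5]{DT2022}): those comparison estimates are typically proved via the equivalent Volterra integral equation and a Gr\"onwall-type iteration that only uses measurability and boundedness of $\ell$, not continuity, so the hypothesis can be relaxed a posteriori. I would state this relaxation as a short remark and then the nonlinear result follows as above. A final bookkeeping point: one must check the edge case $t = a$ (where both bounds collapse to $y_{0,1} - y_{0,2}$ since $E_\alpha(0) = 1$ and $(t-a)^\alpha = 0$) and the degenerate possibility that $y_1(s) \equiv y_2(s)$ on a subinterval, which is handled by the convention $\ell(s) = 0$ there and causes no difficulty.
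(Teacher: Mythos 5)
The paper does not prove this theorem at all: its ``proof'' is a one-line citation to \cite[Theorem~7]{DT2022}. Your reduction to the linear case is therefore necessarily a different (self-contained) route, and it is the natural one --- but as written it has three gaps, two of which are outright errors in intermediate claims even though the intended conclusion survives.

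First, the assertion that ``$y_1$ and $y_2$ both satisfy the same linear equation $D_a^\alpha y_k(t) = \ell(t)\, y_k(t)$'' is false. Take $f(t,y) = y+1$: then $\ell \equiv 1$, but $D_a^\alpha y_1 = y_1 + 1 \ne \ell\, y_1$. What your construction actually gives is that the \emph{difference} $z = y_1 - y_2$ satisfies the homogeneous linear equation $D_a^\alpha z(t) = \ell(t)\, z(t)$ with $z(a) = y_{0,1} - y_{0,2}$, by linearity of the Caputo operator. You then apply Theorem~\ref{thm:bounds-lin} to the \emph{pair} $(z, 0)$ --- the zero function being the second solution of that linear equation, with initial value $0 < y_{0,1}-y_{0,2}$ --- and this yields exactly the bounds you want on $z(t) - 0$. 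The fix is one line, but the step as stated does not hold. Second, the convention $\ell(s) = 0$ at points where $y_1(s) = y_2(s)$ can destroy the inclusion $[\ell_*(t), \ell^*(t)] \subseteq [\tilde\ell_*(t), \tilde\ell^*(t)]$ that your monotonicity argument needs: if every difference quotient of $f$ is, say, $\ge 2$, then $\tilde\ell_*(t) = 2$ while your $\ell_*(t) \le 0$, and the lower bound you derive from the linear theorem is then \emph{weaker} than the one claimed, not stronger. Since the defining identity $f(s,y_1(s)) - f(s,y_2(s)) = \ell(s)(y_1(s)-y_2(s))$ is vacuous at coincidence points, simply assign $\ell(s)$ any value in $[\tilde\ell_*(b), \tilde\ell^*(b)]$ there.

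Third, and most substantively, you correctly identify that the manufactured $\ell$ need not be continuous, but neither proposed repair is carried out. The approximation route requires continuous dependence of the solution of a fractional linear equation on a merely bounded, measurable coefficient --- a nontrivial fact that is nowhere available in the paper and would itself need proof or citation. The ``inspect the proof of the linear theorem'' route defers to \cite{DT2022} anyway, at which point one may as well cite \cite[Theorem~7]{DT2022} directly, which covers the nonlinear case and is precisely what the paper does. So your sketch is a plausible blueprint, but it is not yet a proof; closing the continuity gap is where the real work lies.
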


\begin{proof}
	This is the result of \cite[Theorem 7]{DT2022}.
	\qed
\end{proof}

Theorem \ref{thm:bounds-nonlin} can be applied to the linear case  of Theorem \ref{thm:bounds-lin}. 
In this situation, the 
functions $\ell_*$ and $\ell^*$ 
of Theorem \ref{thm:bounds-lin} coincide with the functions $\tilde \ell_*$
and $\tilde \ell^*$, respectively, of Theorem \ref{thm:bounds-nonlin}.

We use the notation $\beta \sim \gamma$ for  expressions $\beta$ and $\gamma$
that depend on the same quantities to denote that there exist absolute constants $C_1 > 0$ and $C_2 > 0$ 
such that  $C_1 \beta \le \gamma \le C_2 \beta$ for all admissible values of the quantities
that $\beta$ and $\gamma$ depend on.
With this notation and the findings of Lemma \ref{lem:ml}, we can summarize the
statements of  Theorems \ref{thm:bounds-lin} and \ref{thm:bounds-nonlin} more  compactly.

\begin{corollary}
	\label{cor:bounds}
	Under the assumptions of Theorem \ref{thm:bounds-lin} or 
	Theorem \ref{thm:bounds-nonlin} and for any $y_{0,1} > y_{0,2} \in \mathbb R$ we have
	\begin{equation}
		\label{eq:bounds}
		c_* \left( y_{0,1} - y_{0,2} \right) \le y_1(b) - y_2(b) \le c^* \left( y_{0,1} - y_{0,2} \right)
	\end{equation}
	where
	\begin{equation}
		\label{eq:propconst}
		c_* =  E_\alpha (\tilde \ell_*(b) (b-a)^\alpha)
		\quad \mbox{ and } \quad
		c^* =  E_\alpha (\tilde \ell^*(b) (b-a)^\alpha)
	\end{equation}
	for the functions $\tilde \ell_*$ and $\tilde \ell^*$ of \eqref{eq:ltildestar}.
	Since $c^* > c_* > 0$ by \eqref{eq:propconst},
	we can rewrite equation \eqref{eq:bounds} as
	\begin{equation}
		y_1(b) - y_2(b) \sim y_{0,1} - y_{0,2}.
	\end{equation}
\end{corollary}

This observation is the foundation for our  numerical
method in Subsection \ref{sec:nextguess}. 

For later reference, we note a few more facts: 

\begin{remark}
	\label{rem:propconst}
	In general, we cannot expect that the ratio
	\[
	 	\hat c :=  \frac{y_1(b) - y_2(b)}{ y_{0,1} - y_{0,2}},
	\]
	i.e.\ the proportionality factor between the terminal and the initial value of the
	solution to a problem, is known exactly. However, from
	eq.~\eqref{eq:bounds}, we know that $\hat c$  is bounded  above by $c^*$ 
	and  below by $c_*$ given in  \eqref{eq:propconst}.
	As long as no additional information is available to
	obtain a more precise approximate value for $\hat c$, one may  use the mean 
	of the upper and the lower bound, i.e.\ the approximation
	\begin{equation}
		\label{eq:propconstapprox}
		\hat c \approx \frac{c_* + c^*} 2.
	\end{equation}
	To compute this value from \eqref{eq:propconst}, 
	we need to evaluate the quantities $\tilde \ell_*(b)$ and $\tilde \ell^*(b)$
	as defined in eq.~\eqref{eq:ltildestar}. If an approximate solution $\hat y$ to the
	differential equation in question is known at least for some grid points 
	$a = t_0 < t_1 < t_2 < \ldots < t_N = b$, one may select a step size $H>0$ and
	an integer  $M > 0$ and approximate  these upper and lower  bounds 
	by 
	\begin{subequations}
		\label{eq:capprox}
	\begin{eqnarray}
		\tilde \ell_*(b) & \approx & \min \Big \{ \frac{f(t_j, k H + \hat y(t_j)) - f(t_j,\hat y(t_j))}{k H} : \\
				\nonumber
				& & \qquad \qquad j \in \{ 0, 1, 2, \ldots, N \}, 
									k \in \{ \pm1, \pm 2 , \ldots, \pm M \} \Big \}
	\end{eqnarray}
	and
	\begin{eqnarray}
		\tilde \ell^*(b) & \approx & \max \Big \{ \frac{f(t_j, k H + \hat y(t_j)) - f(t_j,\hat y(t_j))}{k H} : \\
				\nonumber
				& & \qquad \qquad j \in \{ 0, 1, 2, \ldots, N \}, 
									k \in \{ \pm1, \pm 2 , \ldots, \pm M \} \Big \},
	\end{eqnarray}
	\end{subequations}
	respectively. Then we obtain $\hat c$ from these approximate values instead of the exact values $\tilde \ell_*(b)$
	and $\tilde \ell^*(b)$.
\end{remark}

\begin{remark}
	\label{rem:dissip}
	Estimating the proportionality factor $\hat c$ as 
	indicated in Remark \ref{rem:propconst} is quite useful when the differential equation in
	eq.~\eqref{eq:tvp} is dissipative, i.e.\ when $(f(t, y_1) - f(t, y_2)) (y_1 - y_2) \le 0$ for all
	$t \in [a,b]$ and all $y_1, y_2 \in \mathbb R$. In this case, eq.~\eqref{eq:ltildestar} implies that
	$\tilde \ell_*(t) \le \tilde \ell^*(t) \le 0$ for all $t$. And due to  the monotonicity 
	of the Mittag-Leffler function $E_\alpha$ (see Lemma \ref{lem:ml}), we obtain 
	\[
		0 < c_* = E_\alpha (\tilde \ell_*(b) (b-a)^\alpha)  \le c^* 
			= E_\alpha (\tilde \ell^*(b) (b-a)^\alpha)  \le E_\alpha (0) = 1 .
	\] 
	Therefore, 
	the interval $[c_*, c^*]$ in which  the correct value of $\hat c$ must lie is quite small. 
	By choosing this interval's midpoint as starting point we only make a small error.
	
	If, on the other hand,
	the differential equation is not dissipative then $c^*$ may be very much larger than $c_*$,
	and the strategy described in Remark \ref{rem:propconst} may lead to an estimate for
	$\hat c$ that is very far away from the correct value. 
	
	In Section \ref{sec:num} we work through  example problems for either case.
\end{remark}

\begin{remark}
	\label{rem:propconstauto}
	Following Remark \ref{rem:dissip} we suggest yet another method for
	approximating  $\hat c$:
	Analyze the given differential equation and see whether
	the approach of Remark \ref{rem:propconst} is appropriate (i.e., whether this does
	not lead to an excessively large value for $\hat c$). If 
	Remark \ref{rem:propconst} does not yield a useful  $\hat c$ value, choose a smaller one.
	More precisely:
	\begin{enumerate}
	\item Approximately compute the values $\tilde \ell_*(b)$ and $\tilde \ell^*(b)$ as indicated in  
		Remark \ref{rem:propconst}.
	\item If $\tilde \ell_*(b) \le \tilde \ell^*(b) \le 0$ then there is no danger of obtaining extremely large values
		for $c_*$ or $c^*$. Thus proceed as suggested in Remark \ref{rem:propconst}.
	\item If $\tilde \ell_*(b) \le 0 < \tilde \ell^*(b)$ then $c_* \le 1$, but $c^*$ may be 
		very much larger than $1$. To dampen the possible overestimation
		that $c^*$ might induce, ignore the precise value of $c^*$ and set $\hat c = 1$.
	\item If $0 < \tilde \ell_*(b) \le \tilde \ell^*(b)$ then $c^* \ge c_* > 1$. 
		Again, to mitigate  an overestimation, use the lower bound of the
		interval $[c_*, c^*]$ as an estimate for $\hat c$, i.e.\ set 
		$\hat c = E_\alpha (\tilde \ell_*(b) (b-a)^\alpha)$ 
		as suggested by the first relation in eq.~\eqref{eq:propconst}.
	\end{enumerate}
\end{remark}

\section{Description of the Method}
\label{sec:method}

\subsection{General Framework}

As indicated in Section \ref{sec:analysis}, the essential characteristics of  
problem \eqref{eq:tvp}  are identical  to those of  classical
boundary value problems and our approach involves shooting methods
\cite{Keller} which are a well established technique for boundary value problems.\\
The basic steps of general shooting methods are
as follows:
\begin{enumerate}
\item \label{step:initialguess}
	Set $k=0$. Given  problem \eqref{eq:tvp}, make an initial guess $\tilde y_0^{(0)}$ for the
	value $y(a)$.
\item \label{step:loop}
	Numerically compute the solution $\tilde y_k$ of the differential equation \eqref{eq:tvp} 
	for the initial condition $\tilde y_k(a) = \tilde y_0^{(k)}$.
	Ignore the terminal condition in \eqref{eq:tvp} in this process.
\item Compare the computed solution $\tilde y_k(b)$ with the desired  solution $y^*$ at point~$b$:
	\begin{enumerate}
	\item If $\tilde y_k(b)$ is sufficiently close to the desired solution $y^*$, 
		accept $\tilde y_k$ as the numerical
		solution of the given terminal value problem \eqref{eq:tvp} and stop.
	\item \label{step:newguess}
		Otherwise, iterate from $k$ to $k+1$ and construct a new improved guess $\tilde y_0^{(k+1)}$ for the
		starting value $y(a)$. Go back to step \ref{step:loop}.
	\end{enumerate}
\end{enumerate}

These simple components  of shooting methods will be specified more precisely 
in the subsequent subsections. The overarching concern here is to keep the
chosen shooting  algorithm's computational complexity low.  
The computational cost of shooting algorithms is  reflected in the number of operations required per 
iteration  step,  multiplied by the  number of iterations needed to achieve satisfactory accuracy.

\subsection{Selecting the Initial Guess $\tilde y_0^{(0)}$ for $y(a)$}
\label{sec:initialguess}

Unless specific  information about the given fractional ODE problem is available that suggests otherwise,
we choose $\tilde y_0^{(0)} = y^*$ as our initial guess
for $y(a)$ required in step~\ref{step:initialguess}, 
i.e.,\ we use the desired terminal value as a first guess for our initial value.

It is often assumed that a good choice of an initial guess 
at $a$ leads to  quick convergence  with
acceptable accuracy  at  $b$, while a poorly chosen initial guess 
might require more iterations, thus leading to  a significantly higher overall computational cost. 
The examples in Section \ref{sec:num}, however, indicate
otherwise. In every test example that we have considered, 
satisfactory accuracy was achieved with very few iterations with our method, no matter how close the 
starting guess at $a$  was  to the exact solution.

\subsection{Numerically Solving a Fractional ODE Initial Value Problem}
\label{subs:ivp-solver}

Algorithms that compute the solution of an (artificially constructed)
initial value problem in step~\ref{step:loop} have been discussed by 
Ford et al.\ \cite{FM2011}, showing  that
the fractional Adams-Bashforth-Moulton (ABM) method \cite{DFF2002,DFF2004} is  a good choice for non-stiff ODEs. However,
for stiff differential equations or when the interval $[a,b]$ is very large, the stability
properties of ABM may be insufficient \cite{Ga2010} and one should use an
implicit linear multistep method such as the fractional trapezoidal method \cite{Ga2015} or a 
fractional backward differentiation formula \cite{Lu1985,Lu1986}.
For our examples in Section \ref{sec:num}, we present the results
obtained with both alternative methods for comparison.
There we use a uniform discretization of the basis interval $[a,b]$ by choosing  an  integer $N> 1$ and  equally spaced grid points 
$t_j = a + j h$ for the step size  $h = (b-a) / N$. This allows us to use FFT techniques and obtain a fast
implementation, see \cite{Ga2018,HLS1985}.
Using the FFT  to compute the numerical solution on  $[a,b]$ takes  $O(N (\log N)^2)$  operations
instead of $O(N^2)$ operations for the standard implementation.

Solutions to fractional differential equations of the type considered here  
are almost never differentiable at their initial point $a$, see \cite[Theorem~6.26]{Di2010}.
This  adversely affects the convergence rate for many numerical methods
such as the ABM method, see \cite{DFF2004}. To improve convergence,
one could replace a uniform mesh by a graded one, see \cite{ZS2022}.
Or one could use a non-polynomial collocation scheme that was suggested, analyzed and tested
in \cite{FMR2014}.  Such techniques will lead to faster convergence and can reach the required accuracy with  
larger step sizes and lower the overall 
computational effort. But these ideas cannot be easily
combined with the FFT technique and consequently their numerical schemes become more costly overall.
We shall not pursue these latter approaches  any further.

\subsection{Improved Subsequent Guesses for the Initial Values}
\label{sec:nextguess}

The major contribution of our work is a new  efficient method for guessing the initial values $y(a)$ that hits
$y^* = y(b)$ more and more accurately. Traditional approaches 
\cite{Di2008,Di2015,FMR2014}  use
 classical bisection that halves the size of the containment interval for the ``correct'' choice of
$y(a)$  in each step. Clearly bisection is convergent, but it takes a large number of iterations to arrive in 
a sufficiently small neighbourhood of the exact solution if the size
of the containment interval is large such as $10^7$ units wide. 
Note that ten interval halving steps reduce the error of the initial value location only by a factor of about $10^3$ since 
$2^{10} \approx$ 1,000 and it would take 40 guess iterations to reduce this error to a reasonable $10^{-5}$. We suggest a different
method that converges much faster. Section \ref{sec:num} compares the new approach with classical bisection based methods.

Like the classical bisection method, our approach also requires two initial guesses
for the initial values $\tilde y_0^{(0)}$
and $\tilde y_0^{(1)}$. For $\tilde y_0^{(0)}$  we always choose $\tilde y_0^{(0)} := y^*$, the given 
terminal value. The next guess for a starting value and all subsequent guesses
are  chosen according to Theorems \ref{thm:bounds-lin} 
and \ref{thm:bounds-nonlin} and the fact that any two solution curves of a given fractional ODE with 
different initial values cannot cross each other. Hence, two solution curves for different starting 
values $y_{0,1} > y_{0,2}$ can either spread out or bunch up further over the time interval $[a, b]$. 
By Corollary \ref{cor:bounds}, the proportion of two solution values 
$y^*_1$ and $y^*_2$ obtained for $t = b$ and their starting values $y_{0,1}$ and $y_{0,2}$ at $t = a$ 
indicates how to space the initial values until we find a starting value $y_0^*$ that reaches 
the desired final value $y^*$ within a chosen error bound.

As long as only one initial guess $\tilde y_0^{(0)}$ is available, i.e., when 
the next guess $\tilde y_0^{(1)}$ has not yet  been  computed, we assume---due to a lack of any
information that might suggest otherwise---that the proportionality factor $\hat c$ between
the terminal values (i.e.\ the function values of the solution at $t=b$) and the initial values 
(the corresponding values for $t=a$), see Remark \ref{rem:propconst},
is given by eq.~\eqref{eq:propconstapprox}. The values of $c_*$ and $c^*$ in this
formula are then replaced by their approximations indicated in eq.~\eqref{eq:capprox}. 
According to  Remark \ref{rem:propconst}, our next guess for the initial value becomes
\begin{equation}
	\label{eq:nextguess1}
	\tilde y_0^{(1)} 
		:= \tilde y_0^{(0)} + \frac{ y^*  - \tilde y_0(b) }{\hat c}.
\end{equation}
$ \tilde y_0^{(1)} $ is equal to the previous guess $\tilde y_0^{(0)}$ if and only if the latter 
has resulted in the exact solution, i.e.,\ if and only if $\tilde y_0(b) = y^*$ and the problem has been solved.

\begin{remark}
	Note that  evaluating  the formulas  in \eqref{eq:capprox}
	requires  knowledge of an approximate solution to the given differential
	equation for some initial condition. At this stage, such information
	is already available because we have computed a 'solution' 
	using the first guess $\tilde y_0^{(0)}$ as initial value.
\end{remark}

\begin{remark}
	\label{rem:hatc-simple}
	\begin{itemize}
	\item The approach described in Remark \ref{rem:propconst} to compute the value $\hat c$
		requires the evaluation of the Mittag-Leffler function $E_\alpha$, cf.~eq.~\eqref{eq:propconst}.
		For this purpose, we suggest to use the algorithm developed in \cite{Ga2015b}.
	\item In case of a non-dissipative fractional differential equation, we have seen in Remark
		\ref{rem:dissip} that the approach of Remark \ref{rem:propconst}
		may lead to  very poor approximations of $\hat c$ and its true value
		may be massively over-estimated. Therefore, for non-dissipative problems, 
		one is likely to be better off with simply choosing an arbitrary not excessively large positive number
		for $\hat c$ such as\ $\hat c = 1$, in eq.~\eqref{eq:nextguess1}. 
	\item If the user believes that evaluating the formulas in \eqref{eq:capprox}
		is too expensive  then one may again use $\hat c = 1$ in equation~\eqref{eq:nextguess1}, even for dissipative  equations.
		Then the guess of ~$\tilde y_0^{(1)}$ may be  
		 worse than the one obtained with  $\hat c$ from \eqref{eq:propconstapprox}
		and the number of iterations until  satisfactory accuracy  
		may increase slightly. 
	\item Remark \ref{rem:propconstauto} suggests another way to choose $\hat c$ as it tries to find a compromise between the two earlier suggestions.
	\end{itemize}

	In the numerical experiments of Section \ref{sec:num}, we  report the results of our new method 
	for various choices of $\hat c$ such as  the construct of Remark \ref{rem:propconst},
	the idea of Remark \ref{rem:propconstauto}, or simply setting $\hat c = 1$. In practice
	the method of Remark \ref{rem:propconstauto} usually leads to the smallest number of iterations and quickest overall convergence.
\end{remark}

Our strategy for constructing additional initial values $\tilde y_0^{(k)}$ for $k = 2, 3, \ldots$ is  based on
Corollary \ref{cor:bounds} which states that,
given two fractional initial value problems for the same  fractional differential equation, but starting from  
different initial conditions, the difference in  terminal values of these two problems 
is approximately proportional to the difference in their initial values. Strictly speaking, Corollary \ref{cor:bounds} only holds in the asymptotic 
sense when the difference between subsequent initial values is tending to zero. 
In practice, our \emph{proportional secting} idea has worked exceedingly well for all types of fractional ODEs
even if this assumption is not satisfied,  and the  results  in Section~\ref{sec:num} show  this clearly.

Next we need to specify  initial value
guesses for $\tilde y_0^{(k)}$ when $k \ge 2$ from two earlier guessed starts. Our algorithm  always analyzes the two most recent  
iteration results for $y$ at $b$ and compares their calculated approximations
$\tilde y_{k-1}(b)$ and $\tilde y_{k-2}(b)$ with the desired value $y^*$.    
How are these three values for $y$ at $b$  positioned? For this we found it convenient to express 
the target value $y^*$ as a generalized convex combination of the two $\tilde y_{\mu}(b)$ values for
$\mu \in \{k-2, k-1\}$ and write
\begin{equation}
	\label{eq:convcomb-target}
	y^* = \lambda_k  \tilde y_{k-1}(b) + (1 - \lambda_k) \tilde y_{k-2}(b)
\end{equation}
for some  $\lambda_k \in \mathbb R$. Here $\lambda_k$ can be immediately  computed 
since all other quantities  in  \eqref{eq:convcomb-target}
are known. 
Since any positioning of the three values relative to each other is possible, this concept  may lead to $\lambda_k < 0$
or $\lambda_k > 1$ which would not be admissible in a classical convex combination, but this does not create any difficulties for our algorithm. 
From  $\lambda_k$ we compute the
new guess for the next shooting start as 
\begin{equation}
	\label{eq:nextguess2+}
	\tilde y_0^{(k)} := \lambda_k \tilde y_0^{(k-1)}+ (1 - \lambda_k) \tilde y_0^{(k-2)}.
\end{equation}
The new starting guess $\tilde y_0^{(k)}$ is a generalized convex combination of the two preceding guesses that uses  the same proportions as those in eq.~\eqref{eq:convcomb-target}. 
Evidently, if the  statement of Corollary \ref{cor:bounds} were an equality and no
errors had occurred in the numerical solver for the initial value problem, then \eqref{eq:nextguess2+} would lead to a starting guess that  hits the desired target value $y^*$ exactly. 

With the value of $\lambda_k$ in eq.~\eqref{eq:nextguess2+} as computed 
from equation~\eqref{eq:convcomb-target},
we  obtain the next initial value  guess as 
\begin{equation}
	\label{eq:c3}
	\tilde y_0^{(k)} 
		= \tilde y_0^{(k-1)} 
			+ \left (y^* - \tilde y_{k-1}(b) \right) 
				\frac{\tilde y_0^{(k-1)} - \tilde y_0^{(k-2)}}{\tilde y_{k-1}(b) - \tilde y_{k-2}(b)} .
\end{equation}
Thus, the correction term that gets us from the previous initial value 
$\tilde y_0^{(k-1)}$ to the next starting guess $\tilde y_0^{(k)}$ is proportional to the error of the previous terminal value $y^* - \tilde y_{k-1}(b)$ multiplied by 
 the proportionality factor made up of  the quotient of the difference of the two preceding initial values
and  the difference of the two preceding terminal values.

Note that formula \eqref{eq:c3} for $\tilde y_0^{(k)}$  is independent of the actual lay of $\tilde y_0^{(k-1)}$ and $\tilde y_0^{(k-2)}$ with respect to each other or to $y^*$. This formulation was chosen deliberately to avoid any lay-logical tree complications when executing our \emph{proportional secting method}. 
The reference point in eq.~\eqref{eq:c3} is always $y^*$. 
The algorithm  computes $\tilde y_0^{(k)}$ whose 
associated function value $y_k(b)$ 
is closer to $y^*$ than at least one of those generated by the initial values $\tilde y_0^{(k-1)}$ or $\tilde y_0^{(k-2)}$. Once $\tilde y_0^{(k)}$ and the associated terminal value $\tilde y_k(b)$ 
have been computed, we drop the oldest point data pair $\tilde y_0^{(k-2)}$ 
and $\tilde y_{k-2}(b)$ and continue with the pair with indices $k$ and $k-1$ 
and iterate on until 
$|y_k(b) - y^*|$ has dropped below the required accuracy threshold.

Compared to  classical bisection, our approach has two significant advantages:
\begin{enumerate}
\item Before a classical bisection method can be started, the correct 
	initial value $y(a)$ of the solution $y$ must be known to lie inside the 
	search interval, i.e., two numbers
	$\underline{y_0}$ and $\overline{y_0}$ with $y(a)
	\in [\underline{y_0}, \overline{y_0}]$ must have been computed for the actual solution $y$  with $y(b) = y^*$. 
	Any first guess $y_0^{(0)}$ provides 
	one of the search interval bounds, but to find another that lies on the opposite side of the 
	unknown exact value of $y(a)$, 
	further iterations are necessary. The proportional secting method does not require any of this; 
	to compute $\tilde y_0^{(k+1)}$ we do  not need to know anything about the lay of 
	$y(a)$, $\tilde y_0^{(k)}$, or $\tilde y_0^{(k-1)}$.
\item In  classical bisection, one starts with the initial interval $[\underline{y_0}, \overline{y_0}]$
	in which the exact solution's value for $y(a)$  is known to be located. In each iteration step, the size 
	of this interval (and hence the accuracy with which one knows the correct initial
	value) is reduced by one half. While this method clearly converges, it is easy to see
	that its convergence is typically rather slow. When  
	the interval $[\underline{y_0}, \overline{y_0}]$ is large, classical bisection  often 
	requires very  many iterations for an acceptable accuracy in the $10^{-6}$ or $10^{-8}$ range. 
	Our examples 
	demonstrate that our proportional secting scheme reduces the size of the
	initial search interval much faster and thereby it solves the shooting problem with fewer  iterations.
\end{enumerate}

\begin{remark}
	Searching for the correct initial value is a nonlinear equations problem.  The proportional secting method 
	solves this nonlinear equation by the secant method. Such an approach
	for handling fractional terminal value problems  was briefly mentioned by Ford and
	Morgado \cite[Section~3]{FM2011}. However, their focus  was on the  
	selection of IVP solvers and not on  shooting strategies. The
	authors of \cite{FM2011} have neither stated any properties
	of this approach nor provided an analysis or given any reasons why to use this method.
	The two main advantages of the secant method over the bisecetion approach
	seem to have been unnoticed so far.
\end{remark}

\begin{remark}
	Our approach 
	 always replaces the older of the two previous initial values, viz.\ $\tilde y_0^{(k-2)}$,
	by the newly computed value $\tilde y_0^{(k)}$ and then proceeds to the next iteration with the
	pair $(\tilde y_0^{(k-1)}, \tilde y_0^{(k)})$.
	If $y^* = y(b)$ lies inside of the interval bounded by $\tilde y^{(k-1)}(b)$ and 
	$\tilde y^{(k-2)}(b)$  we have obtained a guaranteed enclosure
	\[
		y(a) \in \left [\min \left \{ \tilde y^{(k-1)}_0, \tilde y^{(k-2)}_0 \right \}, 
					\max \left \{ \tilde y^{(k-1)}_0, \tilde y^{(k-2)}_0 \right \} \right ]
	\]
	of the solution's exact initial value. Our algorithm does not guarantee such an enclosure in any iteration and we never know whether 
	\[
		y(a) \in \left [ \min \left \{ \tilde y^{(k)}_0, \tilde y^{(k-1)}_0 \right \}, 
					\max \left \{ \tilde y^{(k)}_0, \tilde y^{(k-1)}_0 \right \} \right] .
	\]
	Since for most practical applications it is not relevant to have this information, we actually do not consider this 
	a major drawback.
	We could modify proportional secting so that 
	it retains the enclosure property once it has been obtained. 
	Instead of always replacing the older of the two previous values, we might just replace the 
	value that is on the same side of the exact solution as the new one, thus employing the
	\emph{regula falsi} (false position method). However, using the regula falsi here
	comes with the added cost of potentially requiring  significantly more iterations before reaching
	acceptable accuracy. Therefore we do not pursue this idea further.
\end{remark}

Figure \ref{fig:alg} visualizes the proportional secting iterations for the terminal value problem 
\[
	D_0^\alpha y(t) = \frac 1 {t+1} \sin (t \cdot y(t)) \
	\text{ with } \ y(20) = y^* = 0.8360565
\]
that we will discuss in more detail in Example \ref{ex8} below.
Our interval of interest is $[a,b] = [0, 20]$.
We start with $\tilde y_0^{(0)} = y^* \approx 0.836$ at $a = 0$, use the BDF2 solver
for the initial value problem and obtain the black approximate
solution graph that arrives at $\tilde y_0(b) \approx 0.57$ at time $b = 20$. 
For  simplicity's sake we follow  Remark \ref{rem:hatc-simple}
and construct the next initial value $\tilde y_0^{(1)}$ based on formula \eqref{eq:nextguess1} 
with $\hat c = 1$.
Since $\tilde y_0(b) \approx 0.58 < 0.836\ldots = y^*$, this moves the initial
value exactly $y^* - \tilde y_0(b)$ units up to obtain $\tilde y^{(1)}_0 \approx 1.1$ 
and subsequently $\tilde y_1(b) \approx 0.89$ when traveling along the blue graph. 
The next solution graph from $\tilde y^{(2)}_0 \approx 1.05$ (shown in red) arrives about halfway 
between $\tilde y_1(b)$ and $y^*$ at time~$b$. 
The dotted graph finally reaches $y^*$ at $b$ in 8 iterations with a $10^{-15}$ error. 
An absolute error of approximately $10^{-10}$ at the terminal point $b$ takes 7 iterations,
and 6 iterations suffice to obtain a $10^{-7}$ accuracy
for this example. Thus, each extra iteration
gives us 3 to 4 more accuracy digits at time $b$.

\begin{figure}[htb]
	\centering
	\includegraphics[width = 0.95\textwidth]{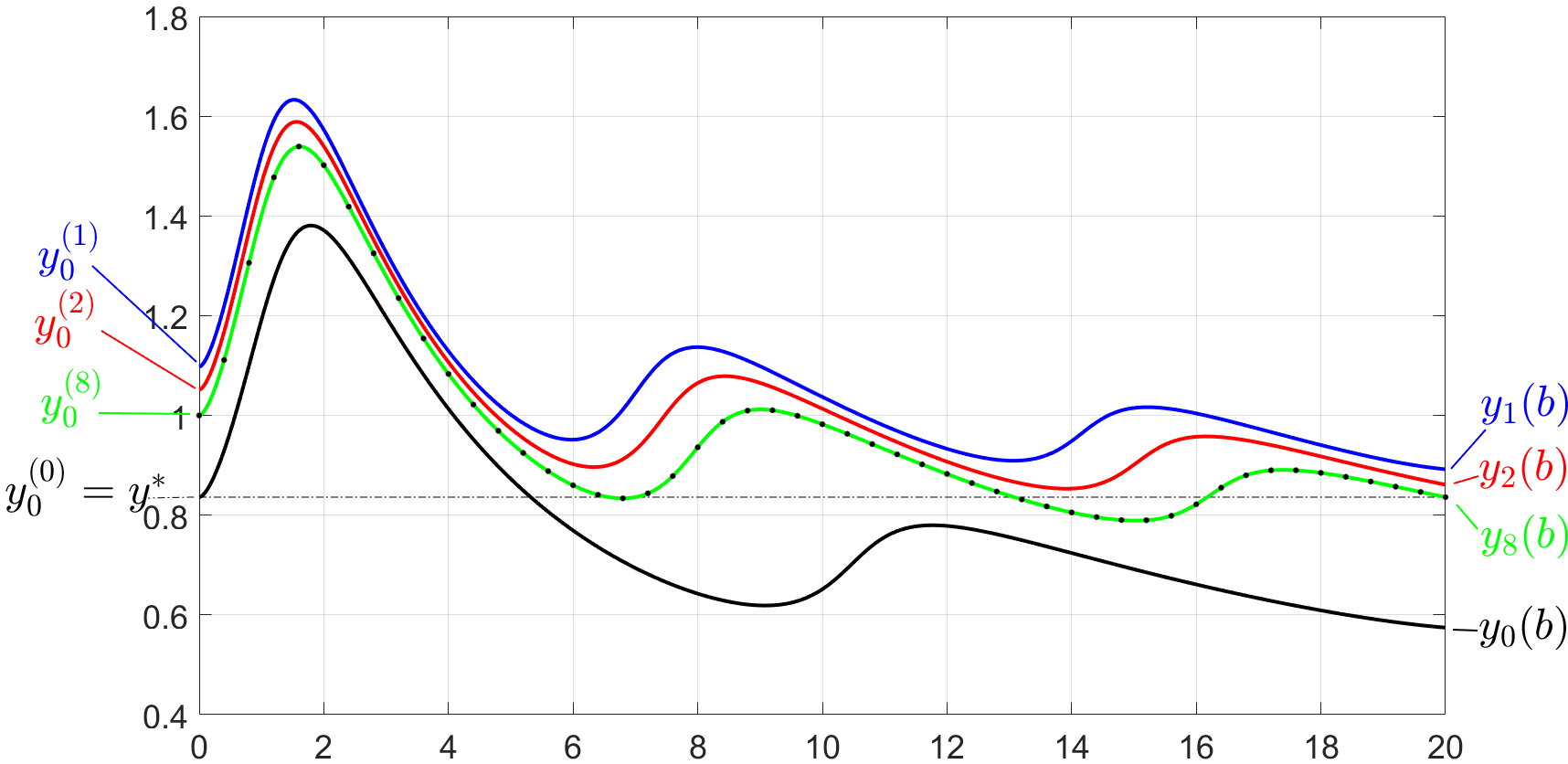}
	\caption{\label{fig:alg}Visualization of the behavior of the algorithm when applied to Example
		\ref{ex8}. The dotted curve is the numerical solution after 8 iterations
		of our shooting method; it cannot be visually distinguished from the exact solution.}
\end{figure}

\subsection{Selection of the Step Size for the Numerical IVP Solver}

To reduce the computational cost of any iterative shooting  algorithm, 
\cite{Di2015} has proposed to vary the step size of the algorithm. When the iteration counter $k$ of the ``shots''  is
small, one assumingly is relatively far  
from the exact solution because the initial value is not sufficiently accurate and it does not make sense to solve the initial value problem with high accuracy.  Moreover one can  likewise use  relatively large steps in the early iterations. Our numerical experiments
in Section~\ref{sec:num} indicate that no such difficult to implement  step size varying procedure
is necessary for either one of the three variants of our proportional secting approach because 
we always arrive at a very accurate solution in a small number (3 to 8 instead of  18 to 65) of  iterations.
Therefore we have decided to use a fixed step size  and the same IVP solver in all shooting iterations.

\subsection{Algorithmic Description of the Proportional Secting Scheme}

We now write out the proportional secting method  in a formal pseudo-code like manner in Algorithm \ref{alg:propsec} below.

\begin{algorithm}
	\renewcommand{\algorithmicrequire}{\textbf{input data:\ \ \ }}

	\begin{algorithmic}
	Shooting method based on proportional secting\label{alg:propsec}
	
	\REQUIRE{
	\begin{itemize}
	\item the terminal value problem \eqref{eq:tvp}, i.e.,\ the function $f$ on the
		right-hand side of the differential equation and the desired terminal value $y^*$
	\item a numerical method for solving fractional initial value problems
		(e.g., the fractional Adams method, the fractional trapezoidal method 
			or a fractional BDF)
	\item a step size $h$ to be used by the numerical IVP solver
		(or an arbitrary set of mesh points for the
		discretization of the time interval $[a,b]$)
	\item a desired accuracy level $\varepsilon > 0$\smallskip
	\item a decision strategy for choosing the value $\hat c$ in eq.~\eqref{eq:nextguess1}
		with three options: the generic choice $\hat c = 1$ and the procedures indicated in 
		Remarks \ref{rem:propconst} and \ref{rem:propconstauto}, respectively
	\end{itemize}
	}
	\hrule
	\smallskip

	\STATE{set $k := -1$}
	\REPEAT
		\STATE increment $k$ by $1$
		\IF{$k=0$}
			\STATE set $\tilde y_0^{(k)} := y^*$
		\ELSIF{$k=1$}
			\STATE set $\tilde y_0^{(k)}$ according to formula \eqref{eq:nextguess1}
		\ELSE
			\STATE set $\tilde y_0^{(k)}$ according to formula \eqref{eq:c3}
		\ENDIF
		\STATE Solve the initial value problem $D_a^\alpha \tilde y_k(t) = f(t, \tilde y_k(t))$, 
			$\tilde y_k(a) = \tilde y_0^{(k)}$
			with the given numerical method and the chosen step size $h$
			or the given mesh, respectively.
	\UNTIL{$|y^* - \tilde y_{k}(b) | \leq \varepsilon$}
	\RETURN the numerical solution $\tilde y_k$
	\end{algorithmic}
\end{algorithm}

\section{Analysis of the Algorithm}

For a better understanding of the performance and  behavior of our algorithm, we  now give a short theoretical analysis.

\subsection{Accuracy and Convergence}

We begin with a look at 
error estimates. The total error 
of our numerical scheme has two active components originating from two different 
sources besides rounding and truncation errors. 

When solving the terminal value problem \eqref{eq:tvp} in the $k$-th iteration of the shooting procedure, 
we solve an adjacent problem from the starting guess 
$y(a) = \tilde y_0^{(k)}$ and obtain $y_k$ with $y_k(b) \neq y(b) = y^*$ for the exact solution $y$. 
This is one component $e_1(k)$ of the total error. Additionally there is another error component 
$e_2(k,N)$ associated with the  $k$-th iterative solution $y_k$, namely the computational error
 $e_2(k, N)$ inherent in the numerical integration algorithm that we use.
$e_2(k, N)$ depends
on the number $N$ of grid points of the fractional IVP solver and on the 
$k$-th initial value problem. 
We shall assume that the grid points are uniformly spaced as $t_j = a + j (b-a) / N$  for $j = 0, 1, 2, \ldots, N$.

We first consider the limit case $k \to \infty$, i.e.\ we assume that we have done
so many shooting iterations that the exact terminal value $y_\infty(b) = y(b) = y^*$ 
is reached numerically. We denote the solutions
for this special terminal value problem by $y_\infty$ for the exact solution
and by $\tilde y_{\infty, N}$ for the numerical solution 
where, in contrast to the convention used in
the remainder of this paper, the notation explicitly indicates the number $N$ of grid points
used in the IVP solver. Since   $k$
cannot be varied any more in the limit case, it remains to study the influence of the
parameter $N$, i.e.,\ the number of grid points of the IVP solver, on the error.
The following theorem  indicates that the convergence order of the 
underlying IVP solver is retained.

\begin{theorem}
	\label{thm:err-bound-infty}
	Assume the hypotheses of Theorem \ref{thm:tvp-sol-ex} and
	that the IVP solver approximates the solution of the
	given fractional differential equation in eq.~\eqref{eq:tvp} with 
	an $O(N^{-p})$ convergence order with some constant $p > 0$ 
	for any initial condition. Then 
	\begin{equation}
		\label{eq:err-k-infty}
		\max_{j = 0, 1, 2, \ldots, N} | y(t_j) - \tilde y_{\infty, N}(t_j) | = O(N^{-p}).
	\end{equation}
\end{theorem}

\begin{remark}
	In Theorem \ref{thm:err-bound-infty}, we require the
	initial value solver to solve the given differential equation with an 
	$O(N^{-p})$ error for \emph{every} initial value, not just for the 
	special initial value of the exact solution of the given terminal value problem. 	
	This requirement 
	is due to a special property of fractional differential equations: 
	The solutions of fractional differential equations tend to have only weak 
	smoothness properties in general, but may behave  
	much more smoothly  for some  exceptional initial values, see 
	\cite[Section 6.4]{Di2010}. If the exact solution of the terminal value problem 
	has such an exceptional initial value then the IVP solver may 
	compute a  numerical
	approximation rapidly  if starting from the exact initial 
	value. But generally the iterations converge much more slowly for all other, even  
	nearby, initial values.
\end{remark}

\begin{proof}
	Our proof has two steps. First we prove that
	\begin{equation}
		\label{eq:proof-err-k-infty-1}
		| y(t_0) - \tilde y_{\infty, N}(t_0) | = O(N^{-p})
	\end{equation}
	and then we use \eqref {eq:proof-err-k-infty-1} to show 
	eq.~\eqref{eq:err-k-infty}. 
	
	In an indirect proof, we assume 
	that \eqref{eq:proof-err-k-infty-1} is not true. Then there exists a
	function $\phi : \mathbb N \to \mathbb R$ and 
	a strictly increasing sequence 
	$(n_\mu)_{\mu=1}^\infty$ of positive integers  such that 
	$\lim_{\mu \to \infty} \phi(n_\mu) = \infty$ and
	\begin{equation}
		\label{eq:proof-err-k-infty-2}
		| y(t_0) - \tilde y_{\infty, n_\mu}(t_0) | \ge c n_\mu^{-p} \phi(n_\mu)
	\end{equation}
	for all sufficiently large $\mu$ and some constant $c > 0$. 
	By definition of $\tilde y_{\infty, N}$, we have $\tilde y_{\infty, N}(t_N) =
	\tilde y_{\infty, N}(b) = y^*$ and thus 
	\begin{equation}
		0 
		=    N^p \cdot | y^* - \tilde y_{\infty, N}(t_N) | 
		=    N^p \cdot | y^* - y_\infty(t_N) +  y_\infty (t_N) - \tilde y_{\infty, N}(t_N) | 
		\ge  |  z_{1,N} - z_{2,N} |
		\label{eq:proof-err-k-infty-3}
	\end{equation}
	where
	\[
		z_{1,N} =  N^p \cdot | y^* - y_\infty(t_N) |
		\quad \mbox{ and } \quad
		z_{2,N} =  N^p \cdot | y_\infty (t_N) - \tilde y_{\infty, N}(t_N) | .
	\]
	And  whenever $N$ occurs in the sequence $(n_\mu)$,
	we have: 
	\begin{itemize}
	\item After convergence $z_{1,N}$ equals $ N^p$ times the absolute value of the difference 
		between the exact solution $y^*$ of the given terminal value problem
		\eqref{eq:tvp} evaluated at $t=b$, i.e.\ the solution to the initial value problem for the associated 
		differential equation combined with the initial value $y(t_0)$,
		and the exact solution of
		the terminal value problem when computed by the shooting method. 
		The shooting method  terminal value problem is equivalent to the initial value problem 
		for the same differential equation but with the initial value $\tilde y_{\infty, N}(t_0)$.
		Thus, by Corollary \ref{cor:bounds} and eq.~\eqref{eq:proof-err-k-infty-2}, we obtain 
		\begin{eqnarray*}
			z_{1,N} 
			&\ge&  N^p \cdot | y(t_0) - \tilde y_{\infty, N}(t_0) | 
						\cdot E_\alpha(\tilde \ell_*(b) (b-a)^\alpha) \\
			&\ge& c N^p N^{-p} \phi(N) 
			= c \phi(N) \to \infty ~ .
		\end{eqnarray*}
	\item After convergence $z_{2,N}$ equals $N^p$ times the absolute value of the error of the 
		numerical solution to the initial value problem
		for $y_\infty$ at the point $t_N$. By assumption, the IVP solver converges at
		an $O(N^{-p})$  rate, and therefore there exists some 
		constant $c' > 0$ such that $0 \le z_{2,N} \le c' N^p N^{-p} = c'$ for all sufficiently 
		large $N$ in the sequence $(n_\mu)$.
	\end{itemize}
	Consequently, for sufficiently large $N$ in the sequence $(n_\mu)$, we have $z_{1,N} > z_{2,N}$,
	and hence the rightmost entry  of eq.~\eqref{eq:proof-err-k-infty-3} is strictly positive,
	giving the required contradiction.
	So eq.~\eqref{eq:proof-err-k-infty-1}  has been proved.
	
	Now note that
	\[
		\max_{j = 0, 1, 2, \ldots, N} | y(t_j) - \tilde y_{\infty, N}(t_j) | 
		\le d_1(N) + d_2(N)	
	\]
	where
	\begin{eqnarray*}
		d_1(N) &=& \sup_{t \in [a, b]} | y(t) - y_\infty(t) | \\
		\mbox{and } \quad
		d_2(N) &=&  \max_{j = 0, 1, 2, \ldots, N} |y_\infty(t_j) - \tilde y_{\infty, N}(t_j)|.
	\end{eqnarray*}
	And $\tilde y_{\infty,N}(t_0) = y_\infty(t_0)$
	because the initial value problem solver that generates the approximation $\tilde y_{\infty, N}$
	is exact for the initial point so that
	\begin{eqnarray*}
		d_1(N) 
		&\le& |y(t_0) - y_\infty(t_0)| \cdot E_\alpha(\tilde \ell^*(b) (b-a)^\alpha) \\
		&\le& |y(t_0) - \tilde y_{\infty,N}(t_0)| \cdot E_\alpha(\tilde \ell^*(b) (b-a)^\alpha)
		= O(N^{-p})
	\end{eqnarray*}
	by Corollary \ref{cor:bounds} and eq.~\eqref{eq:proof-err-k-infty-1}. Moreover, 
	due to  the convergence rate of the initial value problem solver,
	\begin{eqnarray*}
		d_2( N) &=& \max_{j = 0, 1, 2, \ldots, N} |y_\infty(t_j) - \tilde y_{\infty,N}(t_j)| = O(N^{-p})
	\end{eqnarray*}
	for sufficiently large $N$. 
	\qed
\end{proof}

For  only  finitely many shooting steps the following similar
error bounds hold.

\begin{theorem}
	\label{thm:err-bound-k}
	Let $\tilde y_k$ be the numerical solution of a given terminal value 
	problem \eqref{eq:tvp}
	obtained after $k \ge 2$ steps of our shooting method. Then,
	under the assumptions of Theorem \ref{thm:err-bound-infty}, we have
	\[
		\max_{j = 0, 1, 2, \ldots, N} | y(t_j) - \tilde y_k(t_j) |  
			\le e_1(k) + e_2(k, N) 
	\]
	where
	\[
		e_1(k) 
		= \sup_{t \in [a, b]} | y(t) - y_k(t) | 
		\le |y(a) - y_0^{(k)}| \cdot E_\alpha(\tilde \ell^*(b) (b-a)^\alpha)
	\]
	depends only on $k$ and
	\[
		e_2(k, N) 
		= \max_{j = 0, 1, 2, \ldots, N} |y_k(t_j) - \tilde y_k(t_j)| 
		= O(N^{-p})
	\]
	depends on $k$ and the chosen number $N$ of grid points.
\end{theorem}

\begin {proof}
	This can be shown much in the same way as Theorem \ref{thm:err-bound-infty}.
	\qed
\end{proof}

Numerical experiments in Section  \ref{sec:num} will illustrate these estimates.

\subsection{Stability and Robustness}

The numerical stability and robustness of an algorithm are  relevant issues 
when assessing its practical usefulness.  For shooting algorithms we have to deal with two essential aspects
in this context.

We need to understand 
the fundamental idea  here to solve initial value problems
that are close to, but not identical to the initial value problem that are
equivalent to the given terminal value problem.
From Theorem~\ref{thm:bounds-nonlin} and Corollary~\ref{cor:bounds},
we can see the well-posedness of both the original terminal value problem
and the associated equivalent initial value problem. Thus small changes 
in either of these problems, no matter whether they are due to the 
way in which the shooting method works, to rounding errors of the given data or  
 inherent errors of the initial value problem solver, or anything else
do not lead to significant perturbations of the algorithm's output. 

Another key component is the initial value solving
algorithm that is executed  in every
iteration of a shooting method. If  an integrator  with poor stability 
properties is used or if the chosen step size is too large to guarantee stability, then the 
instability will be propagated  into the shooting method and may render 
its output meaningless.  
Fortunately, the stability properties of many frequently used fractional IVP solvers
in the context of fractional ODEs are well understood: for the Adams-Bashforth-Moulton method (ABM), see \cite{Ga2010}, for the 
fractional linear multistep methods such as the fractional BDF2 or the fractional
trapezoidal methods, see  \cite{Lu1985}; for additional information see 
\cite{Ga2015}. These well established  IVP solving methods are highly suitable for our purposes and they will be used in the numerical examples of Section \ref{sec:num}.

\section{Numerical Results}
\label{sec:num}

Here we  present  numerical experiments with our new proportional secting scheme, first  in all its three variants of choosing  $\hat c$ required in eq.~\eqref{eq:nextguess1}
when computing the second guess for the initial value and continuing with further proportional secting  iterations.
We compare our new method  with the conventionally used shooting method that uses bisection.
Our algorithm has been implemented  in MATLAB R2022a on a notebook with an Intel Core i7-8550U
CPU clocked at 1.8~GHz running Windows 10 and in MATLAB R2022b on a  MacBook Pro with 2.4 GHz Quad-Core Intel Core i5 and 16 GB RAM.

In all cases, we have tested the shooting methods with two different solvers for the initial value problems, using 
the Adams-Bashforth-Moulton (ABM) scheme and the second order backward differentiation formula (BDF2) of \cite{Lu1985}. 
The ABM method was implemented in a P(EC)$^m$E structure with four corrector iterations
\cite{Di2003}.  BDF2 is an implicit method and hence needs to solve a nonlinear equation at each time step to compute the corresponding approximate solution. For this we use Garrappa's implementation \cite{Ga2015}. We terminate 
its Newton iterations when two successive values differ  by 
less than $10^{-10}$. All shooting iterations are  
terminated when the approximate solution $y_k$ at the endpoint $b$ of  $[a,b]$  
differs by at most $\varepsilon$ from the desired value  $y(b) = y^*$ for $\varepsilon = 10^{-6}$, $\varepsilon = 10^{-8}$, and $\varepsilon = 10^{-10}$ in our tests.
 
The tables below list the chosen initial value problem solver together with the corresponding step size,
the maximal error over the interval of interest and the number of iterations that each of the shooting methods
needed with the respective combination of IVP solver and step size to converge up to the required accuracy. 
In this context, we note that, since the shooting strategies---and hence the sequences of the chosen 
initial values---differ from each other, the approximate solutions computed by the four different approaches
do not coincide exactly. Therefore, the respective maximal errors are also not precisely identical.
However,  at least for $\varepsilon = 10^{-8}$ and
$\varepsilon = 10^{-10}$, the maximal errors agree with each other at least within the accuracy 
listed in the tables. For $\varepsilon = 10^{-6}$, the error variations are somewhat larger but their values have a common order of magnitude. 
For this case the corresponding table columns list the errors for the worst of our four guessing approaches.

\begin{example}
	\label{ex3}
	Our first example is the terminal value problem
	\begin{eqnarray*}
		D_0^\alpha y(t)
		&  = & \frac{8! \cdot  t^{8-\alpha} }{\Gamma(9-\alpha)}
			- \frac{3 \Gamma(5+\alpha/2) t^{4-\alpha/2}}{\Gamma(5-\alpha/2)} \\
		& & {}  + \frac 9 4 \Gamma(1+\alpha) 
				+ \left(\frac 3 2 t^{\alpha/2} - t^4 \right)^3 - |y(t)|^{3/2}, \\
		y(1) &  = & \frac 1 4,
	\end{eqnarray*}
	whose exact solution is 
	\[
		y(t) = t^8 - 3 t^{4+\alpha/2} + \frac 9 4 t^\alpha.
	\]
\end{example}
	
This is a standard example used for testing numerical methods in fractional
calculus; cf., e.g., \cite{DFF2002}.
We report the results for the special case $\alpha = 0.3$ in Tables
\ref{tab:ex3a}, \ref{tab:ex3b} and \ref{tab:ex3c} below.

\begin{table}[!htb]
	\footnotesize
	\caption{\label{tab:ex3a}Computational cost and accuracy obtained when
		solving Example \ref{ex3} for $\alpha = 0.3$ with different numerical
		methods and $\varepsilon = 10^{-6}$.}
	\centering
	\begin{tabular}{l|c|c|c|c|c|c}
  & & & \multicolumn{4}{c}{\bf number of shooting iterations \ \ \ \ \ \ } \\
	    &      &      &  {\bf bisection}   & \multicolumn{3}{c}{\bf proportional secting} \\
	    &      &      &  &  using             & choosing $\hat c$ in & choosing $\hat c$ in \\
	IVP & step & max. &           &  $\hat c = 1$ & \eqref{eq:nextguess1} according & \eqref{eq:nextguess1} according \\
 solver & size & error &  & in \eqref{eq:nextguess1} & to Remark \ref{rem:propconst}  & to Remark \ref{rem:propconstauto} \\
		\hline
\multirow[c]{7}{*}{Adams}    &  0.00200000  & $ 6.7 \cdot 10^{-6^{\vphantom{1}}}$  & 37 &  5 &  5 &  5 \\
    &  0.00100000  & $ 2.9 \cdot 10^{-6^{\vphantom{1}}}$  & 38 &  5 &  5 &  5 \\
    &  0.00050000  & $ 9.5 \cdot 10^{-7^{\vphantom{1}}}$  & 19 &  5 &  5 &  5 \\
    &  0.00025000  & $ 1.9 \cdot 10^{-6^{\vphantom{1}}}$  & 18 &  5 &  5 &  5 \\
    &  0.00012500  & $ 1.9 \cdot 10^{-6^{\vphantom{1}}}$  & 18 &  5 &  5 &  5 \\
    &  0.00006250  & $ 1.9 \cdot 10^{-6^{\vphantom{1}}}$  & 18 &  5 &  5 &  5 \\
    &  0.00003125  & $ 1.9 \cdot 10^{-6^{\vphantom{1}}}$  & 18 &  5 &  5 &  5 \\
\hline
\multirow[c]{7}{*}{BDF2}    &  0.00200000  & $ 1.4 \cdot 10^{-5^{\vphantom{1}}}$  & 36 &  5 &  5 &  5 \\
    &  0.00100000  & $ 3.2 \cdot 10^{-6^{\vphantom{1}}}$  & 38 &  5 &  5 &  5 \\
    &  0.00050000  & $ 9.5 \cdot 10^{-7^{\vphantom{1}}}$  & 19 &  5 &  5 &  5 \\
    &  0.00025000  & $ 1.9 \cdot 10^{-6^{\vphantom{1}}}$  & 18 &  5 &  5 &  5 \\
    &  0.00012500  & $ 1.9 \cdot 10^{-6^{\vphantom{1}}}$  & 18 &  5 &  5 &  5 \\
    &  0.00006250  & $ 1.9 \cdot 10^{-6^{\vphantom{1}}}$  & 18 &  5 &  5 &  5 \\
    &  0.00003125  & $ 1.9 \cdot 10^{-6^{\vphantom{1}}}$  & 18 &  5 &  5 &  5 \\
\end{tabular}

\end{table}

\begin{table}[!htb]
	\footnotesize 
	\caption{\label{tab:ex3b}Computational cost and accuracy obtained when
		solving Example \ref{ex3} for $\alpha = 0.3$ with different numerical
		methods and $\varepsilon = 10^{-8}$.}
	\centering
	\begin{tabular}{l|c|c|c|c|c|c}
  & & & \multicolumn{4}{c}{\bf number of shooting iterations \ \ \ \ \ \ } \\
	    &      &      &  {\bf bisection}  & \multicolumn{3}{c}{\bf proportional secting} \\
	    &      &      & & using             & choosing $\hat c$ in & choosing $\hat c$ in \\
	IVP & step & max. &           & $\hat c  =  1$ & \eqref{eq:nextguess1} according & \eqref{eq:nextguess1} according \\
 solver & size & error &  & in \eqref{eq:nextguess1} & to Remark \ref{rem:propconst}  & to Remark \ref{rem:propconstauto} \\
		\hline
\multirow[c]{7}{*}{Adams}    &  0.00200000  & $ 4.8 \cdot 10^{-6^{\vphantom{1}}}$  & 51 &  6 &  6 &  6 \\
    &  0.00100000  & $ 1.5 \cdot 10^{-6^{\vphantom{1}}}$  & 51 &  6 &  6 &  6 \\
    &  0.00050000  & $ 4.3 \cdot 10^{-7^{\vphantom{1}}}$  & 51 &  6 &  6 &  6 \\
    &  0.00025000  & $ 1.2 \cdot 10^{-7^{\vphantom{1}}}$  & 50 &  6 &  6 &  6 \\
    &  0.00012500  & $ 5.2 \cdot 10^{-8^{\vphantom{1}}}$  & 51 &  6 &  6 &  6 \\
    &  0.00006250  & $ 8.4 \cdot 10^{-9^{\vphantom{1}}}$  & 26 &  6 &  6 &  6 \\
    &  0.00003125  & $ 1.5 \cdot 10^{-8^{\vphantom{1}}}$  & 25 &  6 &  6 &  6 \\
\hline
\multirow[c]{7}{*}{BDF2}    &  0.00200000  & $ 1.3 \cdot 10^{-5^{\vphantom{1}}}$  & 48 &  6 &  6 &  6 \\
    &  0.00100000  & $ 3.2 \cdot 10^{-6^{\vphantom{1}}}$  & 52 &  6 &  6 &  6 \\
    &  0.00050000  & $ 8.3 \cdot 10^{-7^{\vphantom{1}}}$  & 50 &  6 &  6 &  6 \\
    &  0.00025000  & $ 2.0 \cdot 10^{-7^{\vphantom{1}}}$  & 52 &  6 &  6 &  6 \\
    &  0.00012500  & $ 5.2 \cdot 10^{-8^{\vphantom{1}}}$  & 51 &  6 &  6 &  6 \\
    &  0.00006250  & $ 1.3 \cdot 10^{-8^{\vphantom{1}}}$  & 26 &  6 &  6 &  6 \\
    &  0.00003125  & $ 1.5 \cdot 10^{-8^{\vphantom{1}}}$  & 25 &  6 &  6 &  6 \\
\end{tabular}

\end{table}

\begin{table}[!htb]
	\footnotesize
	\caption{\label{tab:ex3c}Computational cost and accuracy obtained when
		solving Example \ref{ex3} for $\alpha = 0.3$ with different numerical
		methods and $\varepsilon = 10^{-10}$.}
	\centering
	\begin{tabular}{l|c|c|c|c|c|c}
  & & & \multicolumn{4}{c}{\bf number of shooting iterations \ \ \ \ \ \ } \\
	    &      &      & {\bf bisection}    & \multicolumn{3}{c}{\bf proportional secting} \\
	    &      &      &              & using                    & choosing $\hat c$ in & choosing $\hat c$ in \\
	IVP & step & max. &           & $\hat c  =  1$ & \eqref{eq:nextguess1} according & \eqref{eq:nextguess1} according \\
 solver & size & error &  & in \eqref{eq:nextguess1} & to Remark \ref{rem:propconst}  & to Remark \ref{rem:propconstauto} \\
		\hline
\multirow[c]{7}{*}{Adams}    &  0.00200000  & $ 4.8 \cdot 10^{-6^{\vphantom{1}}}$  & 63 &  6 &  6 &  6 \\
    &  0.00100000  & $ 1.5 \cdot 10^{-6^{\vphantom{1}}}$  & 64 &  6 &  6 &  6 \\
    &  0.00050000  & $ 4.3 \cdot 10^{-7^{\vphantom{1}}}$  & 65 &  6 &  6 &  6 \\
    &  0.00025000  & $ 1.2 \cdot 10^{-7^{\vphantom{1}}}$  & 65 &  6 &  6 &  6 \\
    &  0.00012500  & $ 3.2 \cdot 10^{-8^{\vphantom{1}}}$  & 63 &  6 &  6 &  6 \\
    &  0.00006250  & $ 8.4 \cdot 10^{-9^{\vphantom{1}}}$  & 64 &  6 &  6 &  6 \\
    &  0.00003125  & $ 2.3 \cdot 10^{-9^{\vphantom{1}}}$  & 65 &  6 &  6 &  6 \\
\hline
\multirow[c]{7}{*}{BDF2}    &  0.00200000  & $ 1.3 \cdot 10^{-5^{\vphantom{1}}}$  & 62 &  6 &  6 &  6 \\
    &  0.00100000  & $ 3.2 \cdot 10^{-6^{\vphantom{1}}}$  & 65 &  6 &  6 &  6 \\
    &  0.00050000  & $ 8.2 \cdot 10^{-7^{\vphantom{1}}}$  & 65 &  6 &  6 &  6 \\
    &  0.00025000  & $ 2.0 \cdot 10^{-7^{\vphantom{1}}}$  & 65 &  6 &  6 &  6 \\
    &  0.00012500  & $ 5.1 \cdot 10^{-8^{\vphantom{1}}}$  & 64 &  6 &  6 &  6 \\
    &  0.00006250  & $ 1.3 \cdot 10^{-8^{\vphantom{1}}}$  & 63 &  6 &  6 &  6 \\
    &  0.00003125  & $ 3.2 \cdot 10^{-9^{\vphantom{1}}}$  & 65 &  6 &  6 &  6 \\
\end{tabular}

\end{table}

The results  in Tables \ref{tab:ex3a}, \ref{tab:ex3b} and \ref{tab:ex3c} show the following:
\begin{itemize}
\item Even if a very small step size is used in the initial value solver, 
	the best accuracy that we can  achieve is limited by the parameter $\varepsilon$ that governs the termination criterion
	of the shooting iterations. Generally the total error is slightly larger than $\varepsilon$ 
	in lockstep with the chosen step size. For relatively small steps the  error component 
	$e_1(k)$ dominates the overall error estimate
	as established in Theorem \ref{thm:err-bound-k} and for larger steps  $e_2(k,N)$ is the 
	dominant error contribution.
\item In Tables \ref{tab:ex3b} and \ref{tab:ex3c} where the accuracy requirement $\varepsilon$ is much smaller than the
	error term $e_2(k,N)$ from Theorem \ref{thm:err-bound-k}, the convergence rate of  BDF2  is around 
	$O(h^2)$ which is exactly the rate  for the corresponding initial value problems, see \cite{Lu1985}.
	This confirms  the expected  behavior predicted by Theorem \ref{thm:err-bound-infty} where we dealt  
	with the case $\varepsilon = 0$.
	For the Adams method, the expected convergence rate is again $O(h^2)$ for  this example, see \cite{Di2003}.
	But the actual rate  in the data is a little lower at $O(h^{1.9})$, because the step size is still too large for  
	asymptotics to have set in.
\item Varying the strategy for selecting the next initial guess (i.e., switching between classical
	bisection and proportional secting) but not changing the IVP solver
	has no influence on the final result  because a change of starting point guessing means  
	trying to solve the same nonlinear equation by a different
	 method which should not lead to  significantly different results.
\item There is, however, a substantial difference in the number of iterations that the two guessing 
	strategies require to obtain a result with the desired accuracy. Classical 
	bisection  needs many iterations, and this induces a rather
	high computational cost. The proportional secting method performs  much better. 
	It typically requires only between 8\% and 24\% of the iterations for  classical bisection.
\item A simple  run times comparison of the two algorithms 
	 reflects this speedup.
	\begin{itemize}
	\item For example, for the case $\varepsilon = 10^{-10}$ shown in Table \ref{tab:ex3c},
		the Adams method (ABM) with  stepsize  0.001 requires 1.53 s to converge when combined with
		classical bisection, but only 0.15~s in combination with the first variant of proportional secting
		with $\hat c = 1$ in eq.~\eqref{eq:nextguess1}, 
		or 0.21 s with either of its two other variants as specified in Remarks \ref{rem:propconst} and
		\ref{rem:propconstauto},	respectively. 
		Here the simplest choice of $\hat c = 1$ is the fastest overall since calculating the quantities $c^*$ 
		and possibly also $c_*$ of eq.~\eqref{eq:propconst} for $\hat c$ is  time consuming. 	
	\item When using the BDF2 solver for the initial value problems with stepsize $0.0005$  we measured run times of 
		3.38 s for the classical bisection method and  0.34 s and 0.43 s, respectively, for the first, 
		and for the second and third  
		variants of the proportional secting algorithm, respectively.
	\end{itemize}
\item The fractional differential equation of Example \ref{ex3}  is not dissipative. For the proportionality factor $\hat c$
	that is required for the second guess for the initial value 
	when using Remark \ref{rem:propconst},  the lower inclusion bound is
	$c_* \approx 0.23$ and the upper bound is $c^* \approx 2.27 \cdot 10^4$. This inclusion interval is
	rather large and  the strategy of Remark~\ref{rem:propconst} 
	gives us a rather inaccurate approximation  of the optimal $\hat c$ for the second guess. 
	Running our new algorithm with the optimal   value for $\hat c$ in the second starting guess 
	converges in a small number of iterations and with 
	the alternative versions of Remark \ref{rem:propconstauto} or $\hat c = 1$ 
	it needs the same small number of iterations.
	The proportional secting method is very forgiving of bad guesses.
\end{itemize}

\begin{example}
	\label{ex5}
	Our second example deals with the terminal value problem
	\[
		D_0^\alpha y(t) =  - \frac 3 2 y(t) \quad \text{with} \quad
		y(7) = \frac {14} 5 E_\alpha \left( - \frac 3 2 \cdot 7^\alpha \right) \approx 0.6476
	\]
	with $\alpha = 0.3$ and  the
	Mittag-Leffler function $E_\alpha$ of order $\alpha$.	The exact solution is 
	\[
		y(t) = \frac {14} 5 E_\alpha \left( - \frac 3 2 t^\alpha \right) .
	\]
\end{example}

The results are in Tables
\ref{tab:ex5a}, \ref{tab:ex5b} and \ref{tab:ex5c}.
For this and the third example below
we  list the data for fewer step sizes than we did for Example \ref{ex3} earlier.

\begin{table}[!htb]
	\footnotesize 
	\caption{\label{tab:ex5a}Computational cost and accuracy obtained when
		solving Example \ref{ex5} for $\alpha = 0.3$ with different numerical
		methods and $\varepsilon = 10^{-6}$.}
	\centering
	\begin{tabular}{l|c|c|c|c|c|c}
  & & & \multicolumn{4}{c}{\bf number of shooting iterations \ \ \ \ \ \ } \\
	    &      &      &           & \multicolumn{3}{c}{\bf proportional secting} \\
	    &      &      & {\bf bisection} &                     & choosing $\hat c$ in & choosing $\hat c$ in \\
	IVP & step & max. &           & using $\hat c = 1$ & \eqref{eq:nextguess1} according & \eqref{eq:nextguess1} according \\
 solver & size & error &  & in \eqref{eq:nextguess1} & to Remark \ref{rem:propconst}  & to Remark \ref{rem:propconstauto} \\
		\hline
\multirow[c]{3}{*}{Adams}    &  0.0140  & $ 5.4 \cdot 10^{-2^{\vphantom{1}}}$  & 21 &  3 &  3 &  3 \\
    &  0.0070  & $ 3.6 \cdot 10^{-2^{\vphantom{1}}}$  & 23 &  3 &  3 &  3 \\
    &  0.0035  & $ 2.5 \cdot 10^{-2^{\vphantom{1}}}$  & 22 &  3 &  3 &  3 \\
\hline
\multirow[c]{3}{*}{BDF2}    &  0.0140  & $ 1.7 \cdot 10^{-5^{\vphantom{1}}}$  & 22 &  3 &  2 &  2 \\
    &  0.0070  & $ 8.1 \cdot 10^{-6^{\vphantom{1}}}$  & 22 &  3 &  2 &  2 \\
    &  0.0035  & $ 1.3 \cdot 10^{-6^{\vphantom{1}}}$  & 23 &  3 &  2 &  2 \\
\end{tabular}

\end{table}

\begin{table}[!htb]
	\footnotesize 
	\caption{\label{tab:ex5b}Computational cost and accuracy obtained when
		solving Example \ref{ex5} for $\alpha = 0.3$ with different numerical
		methods and $\varepsilon = 10^{-8}$.}
	\centering
	\begin{tabular}{l|c|c|c|c|c|c}
  & & & \multicolumn{4}{c}{\bf number of shooting iterations \ \ \ \ \ \ } \\
	    &      &      &           & \multicolumn{3}{c}{\bf proportional secting} \\
	    &      &      & {\bf bisection} &                     & choosing $\hat c$ in & choosing $\hat c$ in \\
	IVP & step & max. &           & using $\hat c = 1$ & \eqref{eq:nextguess1} according & \eqref{eq:nextguess1} according \\
 solver & size & error &  & in \eqref{eq:nextguess1} & to Remark \ref{rem:propconst}  & to Remark \ref{rem:propconstauto} \\
		\hline
\multirow[c]{3}{*}{Adams}    &  0.0140  & $ 5.4 \cdot 10^{-2^{\vphantom{1}}}$  & 29 &  3 &  3 &  3 \\
    &  0.0070  & $ 3.6 \cdot 10^{-2^{\vphantom{1}}}$  & 28 &  3 &  3 &  3 \\
    &  0.0035  & $ 2.5 \cdot 10^{-2^{\vphantom{1}}}$  & 28 &  3 &  3 &  3 \\
\hline
\multirow[c]{3}{*}{BDF2}    &  0.0140  & $ 1.6 \cdot 10^{-5^{\vphantom{1}}}$  & 28 &  3 &  3 &  3 \\
    &  0.0070  & $ 5.1 \cdot 10^{-6^{\vphantom{1}}}$  & 28 &  3 &  3 &  3 \\
    &  0.0035  & $ 1.3 \cdot 10^{-6^{\vphantom{1}}}$  & 23 &  3 &  3 &  3 \\
\end{tabular}

\end{table}

\begin{table}[!htb]
	\footnotesize
	\caption{\label{tab:ex5c}Computational cost and accuracy obtained when
		solving Example \ref{ex5} for $\alpha = 0.3$ with different numerical
		methods and $\varepsilon = 10^{-10}$.}
	\centering
	\begin{tabular}{l|c|c|c|c|c|c}
  & & & \multicolumn{4}{c}{\bf number of shooting iterations \ \ \ \ \ \ } \\
	    &      &      &           & \multicolumn{3}{c}{\bf proportional secting} \\
	    &      &      & {\bf bisection} &                     & choosing $\hat c$ in & choosing $\hat c$ in \\
	IVP & step & max. &           & using $\hat c = 1$ & \eqref{eq:nextguess1} according & \eqref{eq:nextguess1} according \\
 solver & size & error &  & in \eqref{eq:nextguess1} & to Remark \ref{rem:propconst}  & to Remark \ref{rem:propconstauto} \\
		\hline
\multirow[c]{3}{*}{Adams}    &  0.0140  & $ 5.4 \cdot 10^{-2^{\vphantom{1}}}$  & 31 &  3 &  3 &  3 \\
    &  0.0070  & $ 3.6 \cdot 10^{-2^{\vphantom{1}}}$  & 36 &  3 &  3 &  3 \\
    &  0.0035  & $ 2.5 \cdot 10^{-2^{\vphantom{1}}}$  & 36 &  3 &  3 &  3 \\
\hline
\multirow[c]{3}{*}{BDF2}    &  0.0140  & $ 1.6 \cdot 10^{-5^{\vphantom{1}}}$  & 36 &  3 &  3 &  3 \\
    &  0.0070  & $ 5.1 \cdot 10^{-6^{\vphantom{1}}}$  & 36 &  3 &  3 &  3 \\
    &  0.0035  & $ 1.3 \cdot 10^{-6^{\vphantom{1}}}$  & 34 &  3 &  3 &  3 \\
\end{tabular}

\end{table}

Note the following:
\begin{itemize}
\item In Example \ref{ex5}, the exact solution satisfies $y(b) = y(7) \approx 0.6476$ 
	with $y(a) = y(0) = 14/5 = 2.8$, so $y(b)$ is a relatively poor initial guess for $y(0)$.
	All of our shooting start guessing rules perform very similarly to Example  \ref{ex3}.
\item The performance comparisons between bisection and proportional secting 
	are essentially the same as  those for Example \ref{ex3}. 
	Proportional secting 
	reaches the required accuracy with only between 8.3\% and 
	15\% of the iterations that the bisection method needs and the run times
	decrease correspondingly.
\item For both shooting strategies, the accuracy of the BDF2 second-order backward differentiation formula 
	is much better than the accuracy of the ABM method.
\item This example uses a dissipative fractional differential equation that is linear, homogeneous and has the constant 	
	negative growth coefficient $-3/2$.
	For fractional ODEs with constant coefficients the proportionality factor $\hat c$ is discussed
	in Remark \ref{rem:propconst} and it is simple to compute. 
	In this Example  the lower bound $c_*$ coincides with the upper bound $c^*$ and both of them have the
	value $0.23\ldots$ which we can use for $\hat c$. This leads to a slight reduction in the number of required
	iterations.
\end{itemize}

\begin{example}
	\label{ex8}
	The third example is based on the initial value problem
	\[
		D_0^\alpha y(t) = \frac 1 {t+1} \sin (t \cdot y(t)), \qquad
		y(0) = 1,
	\]
	with $\alpha = 0.7$ on the interval $[a,b] = [0, 20]$. 
\end{example}

The exact solution for this problem is unknown. Using  a second order backward differentiation formula with 16,000,000 
steps on $[a,b] = [0, 20]$, and thus a stepsize of $ 20/(16 \cdot 10^6)  = 1.25 \cdot 10^{-6}$, 
we  have computed the approximate
solution shown in Figure \ref{fig:alg} as the curve highlighted by the black dots. 
We are confident to be very 
close to the exact solution with terminal value $y(b) = y(20) \approx 0.8360565$.
By replacing the given initial condition above by the terminal condition, we obtain a terminal
value problem that we  try to solve with the fractional ODE shooting methods  of this paper. 
This fractional ODE problem appears  more challenging than
those of Examples \ref{ex3} and \ref{ex5} because here we work on a much 
larger interval and with a decaying oscillatory exact solution.
The results are listed in Tables
\ref{tab:ex8a}, \ref{tab:ex8b} and \ref{tab:ex8c}.
Without any precise information of the exact solution, the listed 
computed errors are in comparison to the numerical solution constructed earlier.

\begin{table}[!htb]
	\footnotesize
	\caption{\label{tab:ex8a}Computational cost and accuracy obtained when
		solving Example \ref{ex8} for $\alpha = 0.7$ with different numerical
		methods and $\varepsilon = 10^{-6}$.}
	\centering
	\begin{tabular}{l|c|c|c|c|c|c}
  & & & \multicolumn{4}{c}{\bf number of shooting iterations \ \ \ \ \ \ } \\
	    &      &      &           & \multicolumn{3}{c}{\bf proportional secting} \\
	    &      &      & {\bf bisection} &                     & choosing $\hat c$ in & choosing $\hat c$ in \\
	IVP & step & max. &           & using $\hat c = 1$ & \eqref{eq:nextguess1} according & \eqref{eq:nextguess1} according \\
 solver & size & error &  & in \eqref{eq:nextguess1} & to Remark \ref{rem:propconst}  & to Remark \ref{rem:propconstauto} \\
		\hline
\multirow[c]{3}{*}{Adams}    &  0.0400  & $ 2.0 \cdot 10^{-4^{\vphantom{1}}}$  & 16 &  7 &  8 &  7 \\
    &  0.0200  & $ 5.0 \cdot 10^{-5^{\vphantom{1}}}$  & 20 &  7 &  8 &  7 \\
    &  0.0100  & $ 1.4 \cdot 10^{-5^{\vphantom{1}}}$  & 18 &  7 &  8 &  7 \\
\hline
\multirow[c]{3}{*}{BDF2}    &  0.0400  & $ 5.1 \cdot 10^{-4^{\vphantom{1}}}$  & 16 &  7 &  8 &  7 \\
    &  0.0200  & $ 1.3 \cdot 10^{-4^{\vphantom{1}}}$  & 19 &  7 &  8 &  7 \\
    &  0.0100  & $ 3.4 \cdot 10^{-5^{\vphantom{1}}}$  & 20 &  7 &  8 &  7 \\
\end{tabular}

\end{table}

\begin{table}[!htb]
	\footnotesize
	\caption{\label{tab:ex8b}Computational cost and accuracy obtained when
		solving Example \ref{ex8} for $\alpha = 0.7$ with different numerical
		methods and $\varepsilon = 10^{-8}$.}
	\centering
	\begin{tabular}{l|c|c|c|c|c|c}
  & & & \multicolumn{4}{c}{\bf number of shooting iterations \ \ \ \ \ \ } \\
	    &      &      &           & \multicolumn{3}{c}{\bf proportional secting} \\
	    &      &      & {\bf bisection} &                     & choosing $\hat c$ in & choosing $\hat c$ in \\
	IVP & step & max. &           & using $\hat c = 1$ & \eqref{eq:nextguess1} according & \eqref{eq:nextguess1} according \\
 solver & size & error &  & in \eqref{eq:nextguess1} & to Remark \ref{rem:propconst}  & to Remark \ref{rem:propconstauto} \\
		\hline
\multirow[c]{3}{*}{Adams}    &  0.0400  & $ 2.0 \cdot 10^{-4^{\vphantom{1}}}$  & 25 &  7 &  8 &  7 \\
    &  0.0200  & $ 5.0 \cdot 10^{-5^{\vphantom{1}}}$  & 26 &  7 &  8 &  7 \\
    &  0.0100  & $ 1.2 \cdot 10^{-5^{\vphantom{1}}}$  & 24 &  7 &  8 &  7 \\
\hline
\multirow[c]{3}{*}{BDF2}    &  0.0400  & $ 5.1 \cdot 10^{-4^{\vphantom{1}}}$  & 26 &  7 &  8 &  7 \\
    &  0.0200  & $ 1.3 \cdot 10^{-4^{\vphantom{1}}}$  & 27 &  7 &  8 &  7 \\
    &  0.0100  & $ 3.4 \cdot 10^{-5^{\vphantom{1}}}$  & 27 &  7 &  8 &  7 \\
\end{tabular}

\end{table}

\begin{table}[!htb]
	\footnotesize
	\caption{\label{tab:ex8c}Computational cost and accuracy obtained when
		solving Example \ref{ex8} for $\alpha = 0.7$ with different numerical
		methods and $\varepsilon = 10^{-10}$.}
	\centering
	\begin{tabular}{l|c|c|c|c|c|c}
  & & & \multicolumn{4}{c}{\bf number of shooting iterations \ \ \ \ \ \ } \\
	    &      &      &           & \multicolumn{3}{c}{\bf proportional secting} \\
	    &      &      & {\bf bisection} &                     & choosing $\hat c$ in & choosing $\hat c$ in \\
	IVP & step & max. &           & using $\hat c = 1$ & \eqref{eq:nextguess1} according & \eqref{eq:nextguess1} according \\
 solver & size & error &  & in \eqref{eq:nextguess1} & to Remark \ref{rem:propconst}  & to Remark \ref{rem:propconstauto} \\
		\hline
\multirow[c]{3}{*}{Adams}    &  0.0400  & $ 2.0 \cdot 10^{-4^{\vphantom{1}}}$  & 32 &  8 &  9 &  8 \\
    &  0.0200  & $ 5.0 \cdot 10^{-5^{\vphantom{1}}}$  & 33 &  8 &  9 &  8 \\
    &  0.0100  & $ 1.2 \cdot 10^{-5^{\vphantom{1}}}$  & 33 &  8 &  9 &  8 \\
\hline
\multirow[c]{3}{*}{BDF2}    &  0.0400  & $ 5.1 \cdot 10^{-4^{\vphantom{1}}}$  & 30 &  8 &  9 &  8 \\
    &  0.0200  & $ 1.3 \cdot 10^{-4^{\vphantom{1}}}$  & 33 &  8 &  9 &  8 \\
    &  0.0100  & $ 3.4 \cdot 10^{-5^{\vphantom{1}}}$  & 33 &  8 &  9 &  8 \\
\end{tabular}

\end{table}

These tables again exhibit a similar behavior as our  Examples \ref{ex3} and \ref{ex5}. The
proportional secting method is substantially faster than the classical bisection method. 
It requires significantly fewer shooting iterations to converge for the required accuracy. 
The same holds true for the run time measurements. For  $\varepsilon = 10^{-8}$ 
(see Table \ref{tab:ex8b}) and a BDF2 solver with stepsize $0.02$, the run time is 
0.48~s for the classical bisection method while  proportional secting  
needs only 0.14~s for the variant with $\hat c = 1$ in eq.~\eqref{eq:nextguess1} 
and 0.18~s when $\hat c$ in eq.~\eqref{eq:nextguess1} is chosen according to  Remark \ref{rem:propconst}
and 0.15~s when the idea of Remark \ref{rem:propconstauto} is used to compute $\hat c$.

There is a significant difference in Example \ref{ex8}  compared to Example \ref{ex5}:
Using the strategy of Remark \ref{rem:propconst} to compute the 
second guess for the initial value leads to convergence,
but it is slightly worse than when simply using $\hat c = 1$
as suggested in Remark \ref{rem:hatc-simple}.
This fractional differential equation is not dissipative, and its
containment interval bounds given by \eqref{eq:propconst} are
$c_* \approx 0.05$ and $c^* \approx 5 \cdot 10^7$. Thus, the first containing interval 
for the correct proportionality factor is extremely large
and the midpoint method of Remark \ref{rem:propconst} then starts from a very
large error and continues with a relatively poor second approximate solution so that more iterations
are needed until convergence.

\section{Conclusion}

We have discussed shooting methods for solving fractional 
terminal value problems  with Caputo derivatives numerically.  Choosing the best numerical
IVP solver was not our focus. This has already been discussed extensively 
in \cite{FM2011,Ga2010,Ga2015,Ga2018}.
Instead we have investigated and tested  
algorithms that select initial values for each iteration in fractional ODE shooting
procedures. Classical bisection is often recommended and most often used. It converges rather slowly, 
requiring  many iterations until approximating  the actual solution well. 
The newly proposed proportional secting method is much better here.
It computes the guess of the second and subsequent starting values 
for the shooting procedure rather differently. Three separate 
methods, differing only in the  
guess of the second initial value, have been proposed 
and their respective performances shown to differ only slightly in speed and accuracy.
Our experimental findings are supported by the results of analytical investigations.

\begin{remark}
	The Caputo differential operator listed in eq.~\eqref{eq:def-caputo} is not the only 
	fractional differential operator of order $\alpha$ with starting point $a$ 
	for which one can try to formulate
	terminal value problems. Indeed, it is conceivable to use the so-called Hilfer
	fractional derivatives of order $\alpha$ and type $\mu \in [0,1]$ with starting point $a$ 
	\cite[Definition 3.3]{Hilfer} instead. The special case $\mu = 1$ of this class of operators is, on a very large set 
	of functions, equivalent to the Caputo operators that we have discussed here;
	for $\mu = 0$ one obtains the Riemann-Liouville derivatives \cite[Chapter 2]{Di2010}.
	In principle, one could use the approach that we have proposed here to handle such
	generalized terminal value problems. One would then have to replace the initial value
	problem solving subroutines for Caputo IVPs in Algorithm \ref{alg:propsec}
	by corresponding functions for the modified operators, which is a relatively straightforward matter. 
	However, to theoretically justify the proportional secting idea in this generalized
	context, one would also need to generalize our Corollary~\ref{cor:bounds}
	in a corresponding way. Such a result currently does not seem to be available.
\end{remark}

\section*{Software}
The MATLAB source codes of the algorithms described in this paper, including all required
auxiliary functions, can be downloaded from a dedicated repository \cite{DGU2023} on the Zenodo 
platform, thus allowing all  readers to reproduce the results of our numerical experiments.
Our functions were tested in MATLAB R2022a and  MATLAB R2022b.

\section*{Declarations}
{
\small

\noindent{\bf Funding}
The authors did not receive support from any organization for the submitted work.
\medskip

\noindent{\bf Declaration of Interests}
The authors declare that they have no known competing financial interests 
or personal relationships that could have appeared to influence the work reported in this paper.
\medskip

\noindent{\bf Author Contributions}
Both authors contributed equally to this work.
\medskip

\noindent{\bf Data Availability}
Not applicable.
}

\begin{acknowledgements}
The work described in this paper originated from a discussion between the two authors that
was initiated after a Zoom talk by the first author  in the Irish Numerical Analysis Forum in 2021. 
We are grateful to the organizers Natalia Kopteva and Martin Stynes for their Numerical Analysis Forum 
that brought us together by chance.

Our algorithm uses some functions developed and implemented by Roberto Garrappa
as auxiliary subroutines, e.g.\ for the
evaluation of the Mittag-Leffler function \cite{Ga2015b} and for solving fractional initial value 
problems \cite{Ga2018}. These
routines are also of interest in their own right. We thank Roberto Garrappa for allowing us to include
his codes into our software suite~\cite{DGU2023}.
\end{acknowledgements}

\bibliographystyle{spmpsci}
\bibliography{Diethelm-Uhlig}

\end{document}